\pgfplotsset{width=7cm,compat=1.8}
\newcommand{\R}{\mathcal{R}}
\DeclareMathOperator*{\argmin}{arg\,min}
\DeclareMathOperator{\prox}{prox}
\DeclareMathOperator{\id}{id}
\DeclareMathOperator{\spec}{spec}
\DeclareMathOperator{\PnP}{PnP}
\DeclareMathOperator{\range}{range}
\renewcommand{\epsilon}{\varepsilon}
\newcommand{\RegFunc}{g}
\newcommand{\NNparam}{\theta}
\newcommand{\ForwardOp}{A}
\newcommand{\Expect}{\mathbb{E}}
\newcommand{\Real}{\mathbb{R}}
\newcommand{\PClass}{\mathbb{P}}
\newcommand{\EncSpace}{\Xi}
\newcommand{\Encoder}{\mathcal{E}}
\newcommand{\Op}[1]{\operatorname{\mathcal{#1}}}
\newcommand{\ac}[1]{#1}
\newcommand{\ie}{i.e.,}
\newcommand{\Dictionary}{\mathbb{D}}
\newcommand{\oper}[1]{\operatorname{\mathcal{#1}}}
\newcommand{\AnalysisOp}{\oper{E}}
\newcommand{\SynthesisOp}{\AnalysisOp^{\ast}}
\newcommand{\EncCoeff}{\xi}
  \newcommand{\EncCoeffEst}{\hat{\EncCoeff}}
\newcommand{\NNLoss}{L}
\newcommand{\MLrecoparam}{\theta}
\newcommand{\RecSpace}{X}
\newcommand{\signal}{x}
\newcommand{\stsignal}{\mathbf{\signal}}
\newcommand{\DataSpace}{Y}
\newcommand{\data}{y}
\newcommand{\stdata}{\mathbf{\data}}
\newcommand{\RecOp}{\oper{R}}
\newcommand{\DataDiscrep}{f}
\newcommand{\RegParam}{\lambda}
\newcommand{\Lp}{L}
\newcommand{\SAset}[1]{ \{ {#1} \} } 
\begin{document}

\title{Convergent regularization in inverse problems and linear plug-and-play denoisers
}

\titlerunning{Convergent regularization and linear plug-and-play denoisers}

\author{Andreas Hauptmann \and Subhadip Mukherjee\and Carola-Bibiane Sch\"onlieb \and Ferdia Sherry   
}


\institute{AH is with the Research Unit of Mathematical Sciences, University of Oulu, Finland and also with the Department of Computer Science, University College London, UK. SM is with the	Dept. of Electronics and Electrical Communication Engineering, IIT Kharagpur, India. CBS and FS are
with the Dept. of Applied Mathematics and Theoretical Physics, University of Cambridge, UK.\\
Emails: Andreas.Hauptmann@oulu.fi, fs436@cam.ac.uk, smukherjee@ece.iitkgp.ac.in, cbs31@cam.ac.uk.          
}

\date{Received: date / Accepted: date}

\maketitle

\begin{abstract}
	Plug-and-play (PnP) denoising is a popular iterative framework for solving imaging inverse problems using off-the-shelf image denoisers. Their empirical success has motivated a line of research that seeks to understand the convergence of PnP iterates under various assumptions on the denoiser. While a significant amount of research has gone into establishing the convergence of the PnP iteration for different regularity conditions on the denoisers, not much is known about the asymptotic properties of the converged solution as the noise level in the measurement tends to zero, i.e., whether PnP methods are provably convergent regularization schemes under reasonable assumptions on the denoiser. This paper serves two purposes: first, we provide an overview of the classical regularization theory in inverse problems and survey a few notable recent data-driven methods that are provably convergent regularization schemes. We then continue to discuss PnP algorithms and their established convergence guarantees. Subsequently, we consider PnP algorithms with linear denoisers and propose a novel spectral filtering technique to control the strength of regularization arising from the denoiser. Further, by relating the implicit regularization of the denoiser to an explicit regularization functional, we rigorously show that PnP with linear denoisers leads to a convergent regularization scheme. More specifically, we prove that in the limit as the noise vanishes, the PnP reconstruction converges to the minimizer of a regularization potential subject to the solution satisfying the noiseless operator equation. The theoretical analysis is corroborated by numerical experiments for the classical inverse problem of tomographic image reconstruction. 
																									
	\keywords{Inverse problems \and Variational regularization \and Data-driven learning \and Plug-and-play denoising.}
\end{abstract}

\section{Introduction}
\label{intro}
Inverse problems deal with the estimation of an unknown model parameter $x^{*}\in X$ from its noisy and indirect measurement $y^{\delta}\in Y$ given by
\begin{equation}
	y^{\delta} = Ax^{*} + e.
	\label{eqn:obs_eq}
\end{equation}
We consider the case where $X$ and $Y$ are (potentially infinite dimensional) separable Hilbert spaces and $A:X\rightarrow Y$ is a bounded linear operator. $X$ and $Y$ are endowed with inner products $\langle\cdot, \cdot \rangle_X$ and $\langle\cdot, \cdot \rangle_Y$, inducing the norms $\Vert\cdot\Vert_X$ and $\Vert\cdot\Vert_Y$, respectively. The measurement noise level is bounded by $\delta$, i.e., $\Vert e\Vert_Y\leq \delta$. The clean measurement is denoted by $y^0$.  

The inverse problem in \eqref{eqn:obs_eq} is considered ill-posed in the sense of Hadamard, if either \textit{injectivity} or \textit{surjectivity} of the forward operator, or \textit{stability} of the solution map is violated. For instance, if $A$ is a compact operator with an infinite-dimensional range, then surjectivity and stability are not satisfied. This is, for example, the case for the ray transform operator that underlies many applications in medical imaging, such as computed tomography (CT) and positron emission tomography (PET) \cite{natterer1986mathematics,natterer2001mathematical}. The study of inverse problems usually assumes ill-posedness, as we will also do in the following.

In order to address ill-posedness, one needs to introduce a general concept for stable and unique solvability for an inverse problem of the form \eqref{eqn:obs_eq}. Due to the aforementioned ill-posedness, we can not guarantee to recover the true solution $x^*$ for all measurements and hence we first need the concept of a generalized solution. 
A common approach is to search for solutions that are closest to the measured data with respect to a suitable data discrepancy term $f:\DataSpace\times\DataSpace \to \Real_+$, such as the (squared) distance in the norm, i.e., $f(\ForwardOp\signal,\data^\delta)=\|\ForwardOp\signal-\data^\delta\|_Y^2$. Then we search for $\widetilde{x}\in X$ such that
\begin{equation}\label{eqn:dataDiscMini}
	f(\ForwardOp\widetilde{\signal},\data^\delta) \leq  f(\ForwardOp\signal,\data^\delta) \hspace{0.5 cm} \text{for all } x\in X.
\end{equation}
\eqref{eqn:dataDiscMini} implies that $\widetilde{x}$ is closest to the measured data with respect to $\DataDiscrep$, which deals with the violation of surjectivity by disregarding components of $y^\delta$ in the co-kernel of $\ForwardOp$. Furthermore, if $\ForwardOp$ has a non-trivial null space, then $\widetilde{x}$ is not unique. To obtain a unique solution, one can define the minimum norm solution as
\begin{equation}\label{eqn:dataDiscMini_minL2}
	x^\dagger=\arg \min_{x \in X}\{\|x\|_X : \ x  \text{ minimizes }f(\ForwardOp\signal,\data^\delta)\}.
\end{equation}
The element $x^\dagger$ can now be considered a desirable generalized solution to \eqref{eqn:obs_eq}.
When $f$ and $\|\cdot\|_X$ are given by the  squared $L^2$-norm, we call $x^\dagger$ the least-squares minimum-norm solution and can define a mapping $A^\dagger:Y\to X$, such that $x^\dagger=A^\dagger y^\delta$. In fact, the mapping $A^\dagger$ defines what is referred to as the Moore-Penrose pseudo-inverse. Unfortunately, if the operator $A$ is compact, then $A^\dagger$ will be unbounded and as such does not take care of the stability problem in the presence of noise in the data. This is where the concept of \textit{regularization} becomes important, as we will discuss next.

In order to deal with the stability issue, \emph{regularization theory} considers specifically designed solution maps. Such a solution map $\R({\cdot;\lambda}):Y\rightarrow X$, also called a \textit{reconstruction operator}, is expressed as a parametric map that produces an estimate of $x^*$ given $y^{\delta}$. Here, the parameter $\lambda$ depends on the noise level $\delta$, and we denote this explicitly by the mapping $\delta\rightarrow\lambda(\delta)$. In this paper, we are specifically interested in the notion of \textit{convergent regularization} which can be understood as \textit{convergence} of the reconstruction operator when the noise level $\delta$ tends to zero. More specifically, we want that when the noise level $\delta\to 0$, then $\lambda(\delta)\rightarrow \lambda_0\geq 0$, and the reconstruction operator $\R({y^{\delta};\lambda})$ converges to a generalized solution of the noiseless operator equation 
\begin{equation}
	Ax=y^{0}.
	\label{eq:noiseless_op_eqn}
\end{equation}
In the following, we will first review classical approaches to regularization in Section \ref{sec:Regularization_overview} and why inverse problems necessitate a generalized notion of solvability. We will then continue to discuss how this classical approach can be combined with modern data-driven methods. In particular, we will devote special attention to the so-called plug-and-play (PnP) approaches in Section \ref{sec:Reg_byPnP}, which have been shown to yield excellent empirical results for imaging inverse problems. These methods utilize state-of-the-art denoisers, model- or data-driven, to replace the proximal operators within iterative proximal splitting algorithms for solving an underlying variational minimization problem for reconstruction. We will subsequently provide an analysis in Section \ref{sec:controll_regPnP} of how PnP approaches can in fact provide a convergent regularization method in the classical sense, in particular, with linear denoisers. 



\section{Regularization for inverse problems and data-driven methods}
\label{sec:Regularization_overview}
Regularization theory has been a rich and successful field in inverse problems for several decades. The primary motivation is to formulate a well-posed and stable inversion procedure that converges provably to a solution of the noiseless operator equation \eqref{eq:noiseless_op_eqn}. The emergence of data-driven methods has given the field of inverse problems a new direction: by using large quantities of data we can significantly improve reconstruction results. However, the underlying question of a convergent regularization remains: does the obtained reconstruction solve the underlying operator equation? 

Indeed, there exist a few methods that are provably convergent regularization methods, we refer to \cite{mukherjee2023learned} for a survey. In the following, we will give a short overview of the regularization theory and existing data-driven approaches that are provably convergent regularization methods in this context.
\subsection{Classical regularization theory}
Stable solutions to inverse problems need a way to handle varying noise levels. For this purpose, the concept of regularization has proven highly useful. Roughly, regularization can be understood as a convergence requirement to a unique solution, e.g., the minimum norm  solution $x^\dagger$, where convergence depends on the noise level $\delta$. That is, formally we consider the previously discussed reconstruction operator $\R_\lambda := \R(\cdot,\lambda)$, which provides a parameterized family of continuous operators $\R_\lambda:Y\to X$. The parameter $\lambda$ depends on the noise level $\delta>0$, where $\| y^{\delta} - y^0 \| \leq \delta$ and $y^0 := A x^*$ denotes noise-free data. We say that the family of reconstruction operators is a convergent regularization method if there exists a parameter choice rule $\delta \mapsto \lambda(\delta, y^{\delta})$ such that reconstructions $x^\delta:=\R_{\lambda(\delta,y^\delta)}(y^\delta)$ converge to the solution $x^\dagger:=A^{\dagger} y^0$ given by the pseudo-inverse as noise vanishes, in the sense that
\begin{equation}\label{eqn:classicRegCondition}
	\limsup_{\delta \to 0} \,\bigl\Vert x^\delta
	-
	x^{\dagger}
	\bigr\Vert_{X}=0
	\quad\text{as}\quad
	\limsup_{\delta \to 0} \,
	\{\lambda(\delta,y^{\delta})\}
	= 0.
\end{equation}
In other words, we have point-wise convergence of the reconstruction operators to the pseudo-inverse, i.e., $\R_{\lambda(\delta, y^{\delta})}(y^\delta) \to A^\dagger y^0$ as $\delta\to 0$. We refer interested readers to \cite{engl1996regularization} for a detailed discussion. This is, of course, quite restrictive and only considers convergence to the least-squares minimum-norm solution. Nevertheless, this can already be used as a useful tool to design learned regularization methods, i.e., learned reconstruction approaches that formally satisfy the above convergence criteria, as we will discuss in the following.
\subsubsection{Direct regularization}
Motivated by the convergence to the pseudo-inverse solution, one can obtain a regularization method by mimicking the construction of the pseudo-inverse. In finite dimensions, this can be achieved by the singular value decomposition (SVD) $A=USV^{\top}$ of the forward operator. The pseudo-inverse can then be simply obtained by $A^\dagger = VS^\dagger U^{\top}$, where $S^\dagger$ is the transposed singular value matrix with inverted singular values. A regularization method is now obtained by filtering the singular values with a noise-dependent filter function, or a noise level-dependent truncation. 

Similarly, direct reconstruction methods that apply a regularized inverse of the forward operator can be shown to be convergent regularization methods. The most prominent example of such methods is the filtered back-projection (FBP) for X-ray CT, which is, in fact, still relevant in clinical practice. Here, the filtering operation removes high-frequency components in Fourier space to regularize the reconstructions. If the filtering is interpreted as a noise-dependent mollifier, one obtains the general class of approximate inverse \cite{schuster2007method} with convergence as noise vanishes. 

A popular approach in data-driven methods is to formulate a reconstruction operator as composition of a regularized reconstruction operator $\R_\lambda:\DataSpace\to\RecSpace$ with a data-driven component $C_{\theta}:\RecSpace\to\RecSpace$. That is, the reconstruction operator is parameterized as $\R_{(\theta,\lambda)} := C_{\theta}\circ \R_\lambda$, where the data driven component $C_{\theta}$ is designed to improve the reconstruction by removing noise or undersampling artefacts, usually given by a deep convolutional neural network (CNN) \cite{postprocessing_cnn,kang2017deep}. These approaches are also popularly referred to as \textit{post-processing methods}. 



Such one-step post-processing approaches are especially popular due to their simplicity, as $C_{\theta}$ can be efficiently trained when supervised pairs of high and low-quality reconstructions are available. Unfortunately, there are very few results on reconstruction guarantees for such methods. Specifically, the problem formulation as a composition of a regularized reconstruction followed by the data-driven component causes the reconstruction to often violate the so-called \textit{data-consistency criterion}. That is, even if the data-fidelity $f(\bigl(\ForwardOp \circ \R_\lambda\bigr)(\data^\delta),\data^\delta)$ is small, it does not necessarily imply a small value of $f\bigl(\bigl(\ForwardOp \circ {C}_{\theta}\circ \R_\lambda\bigr)(\data^\delta),\data^\delta\bigr)$ corresponding to the output of the post-processing network ${C}_{\theta}$. Thus, such schemes do not satisfy the convergence of the data fidelity and hence fail to be a convergent regularization strategy. 

Nevertheless, as proposed in \cite{Schwab_2019_null_space}, this approach can be reformulated by constructing the post-processing network as ${C}_{\theta} = \id+\left(\id-A^{\dagger}A\right){Q}_{\theta}$, where ${Q}_{\theta}$ is a Lipschitz-continuous Deep Neural Network (DNN) and $\id$ denotes the identity operator on $X$. Here, $\left(\id-A^{\dagger}A\right)$ is the projection operator onto the null-space of $A$ and hence the operator ${C}_{\theta}$ (referred to as null-space network) always satisfies $\bigl(\ForwardOp \circ {C}_{\theta}\circ \R_\lambda\bigr)(\data^\delta)=\bigl(\ForwardOp \circ \R_\lambda\bigr)(\data^\delta)$, ensuring that the output of ${C}_{\theta}$ explains the observed data. More importantly, the null-space network maintains the regularizing properties of the reconstruction method $\R_\lambda$ and hence provides a convergent regularization scheme \cite{Schwab_2019_null_space} in the sense of direct regularization. See \cite{boink2023data}  for a recent extension of null-space networks to non-linear inverse problems.

\subsection{Variational regularization}
The classical regularization theory, which defines convergent regularization by convergence to the pseudo-inverse solution as defined in \eqref{eqn:classicRegCondition} limits possible solutions. Therefore, one can consider more general variational approaches to inverse problems, which have been particularly popular due to their flexibility in incorporating prior knowledge and dealing with varying noise distributions. In the variational regularization framework, solutions are computed by minimizing a composite objective consisting of the data-consistency term and a regularization term. In particular, the solutions are given by
\begin{equation}
	\R({y^{\delta};\lambda})\in \underset{x\in X}{\argmin}\,f(Ax,y^{\delta})+\RegFunc_{\lambda}(x).
	\label{eqn:varrecon_eq}
\end{equation}
The loss functional $f:Y\times Y\rightarrow\mathbb{R}^{+}$ measures data fidelity and is not restricted anymore to be the squared $L^2$-norm. 
The regularization functional $g_{\lambda}:X\rightarrow\mathbb{R}$ encodes prior belief about the ground-truth $x^*$ and effectively restricts the null space of $A$. In the general case, $\lambda$ can be a parameter of the functional, and more commonly, a simple weighting parameter to balance between the two terms of the composite objective in \eqref{eqn:varrecon_eq} (i.e., $g_{\lambda}(x)=\lambda g(x)$, where $g$ is the regularizer). 
The choice of a suitable regularizer $g_\lambda$ is governed by the need to balance two important factors: desirable analytical features and the encoded prior belief. For instance, an analytically favorable choice is given by the squared $L^2$-norm, which, in combination with a squared $L^2$-norm for the data fidelity, provides a closed-form solution. Unfortunately, the obtained solutions corresponding to this choice of the regularizer will be smooth, which may not be suitable for many imaging applications. Consequently, more advanced sparsity-promoting priors have been favored, most commonly the $L^1$-norm for sparse signals and total variation (TV) for sparse gradients, i.e., piece-wise constant functions. These regularizers are non-differentiable and hence need more advanced non-smooth optimization techniques to compute a minimizer \cite{benning2018modern}, but they typically lead to a better reconstruction than the simple squared $L^2$-norm-based regularization. See Figure \ref{fig:CT_different_reg} for a comparison of a few handcrafted regularizers in the context of sparse-view CT reconstruction.  
\begin{figure}
	\centering
	\begin{subfigure}{0.35\textwidth}
		\includegraphics[width=\textwidth]{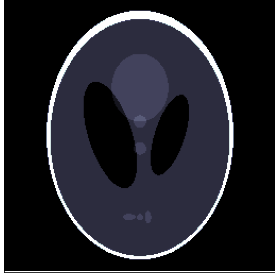}
		\caption{ground-truth}
	\end{subfigure}
	\begin{subfigure}{0.35\textwidth}
		\includegraphics[width=\textwidth]{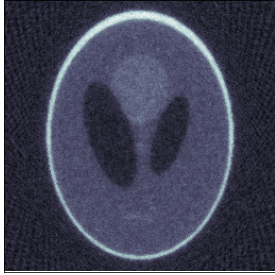}
		\caption{$g_{\lambda}(x)=\lambda\,\|x\|_2^2$}
	\end{subfigure}
	\begin{subfigure}{0.35\textwidth}
		\includegraphics[width=\textwidth]{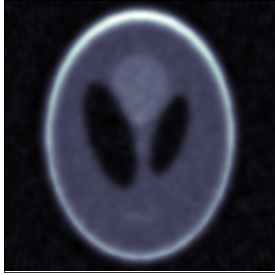}
		\caption{$g_{\lambda}(x)=\lambda\,\|\nabla x\|_2^2$}
	\end{subfigure}
	\begin{subfigure}{0.35\textwidth}
		\includegraphics[width=\textwidth]{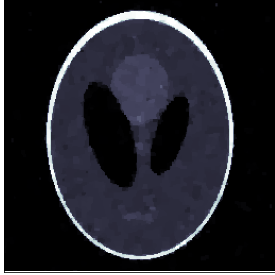}
		\caption{$g_{\lambda}(x)=\lambda\,\|\nabla x\|_1$}
	\end{subfigure}
	\caption{\small{Sparse-view CT reconstruction with different regularizers. The noisy sinogram is generated by first computing parallel-beam projections along 64 equally-spaced angular positions of the source, with 365 lines per position, and then by adding white Gaussian noise. The regularization parameter $\lambda$ is chosen to be $\lambda=0.1$ in (b) and $\lambda=10^{-4}$ in (c) and (d). This experiment demonstrates that the convergence of the regularization scheme is not particularly indicative of the quality of the reconstruction and underscores the need to learn a data-adaptive regularizer for enhancing the reconstruction quality.}}
	\label{fig:CT_different_reg}
\end{figure}


Notably, the role of the two terms in \eqref{eqn:varrecon_eq} is conceptually similar to the general formulation in \eqref{eqn:dataDiscMini_minL2} of a minimum-norm solution. Nevertheless, the variational formulation provides more flexibility and also necessitates a broader concept of regularization. This is because we can not always guarantee convergence to the minimum-norm solution, but we have to consider convergence with respect to the chosen regularization functional $\RegFunc_\lambda$ \cite{scherzer2009variational}.

We can then formulate a regularization method in the framework of variational regularization as follows: let us first denote $x_{\lambda} \in X$ to be a minimizer to the objective in \eqref{eqn:varrecon_eq} for a given $\lambda$ with data $y^{\delta} \in Y$ and noise level $\Vert y^\delta - y^0 \Vert_Y < \delta$. Similarly, as before, we assume that there is a corresponding parameter choice rule $\delta \mapsto \lambda(\delta,y^{\delta})$ such that $\lambda\rightarrow\lambda_0$ as $\delta\rightarrow 0$. The variational model defined by \eqref{eqn:varrecon_eq} is then said to \emph{converge to a $g$-minimizing solution} if $x_{\lambda(\delta,y^{\delta})} \to \hat{x}$ as $\delta \to 0$.
Here, $\hat{x} \in X$ solves the variational model that corresponds to \eqref{eqn:varrecon_eq} with noise-free data $y^0 \in Y$, i.e., 
\begin{equation}
	\hat{x} \in \argmin_{x \in X}\,\,
	g_{\lambda_0}(x) 
	\quad\text{subject to $y^0=A x^*$ and 
		where $\lambda_0 := \lim_{\delta \to 0}\lambda(\delta,y^{\delta})$.}
	\label{eq:R_min_def}
\end{equation}
The primary differences to the classical formulation here are, that the minimizer of the regularizing functional $g$ is not necessarily unique and the regularization parameter is not required to converge to 0. 

Let us remark to this end, that it is desirable to formulate a regularizer that has small values for the desired images, i.e., it penalizes undesired solutions but is also analytically or computationally tractable. 
It is important to note at this point that different regularizers $g$, which provide a convergent regularization, will still produce very different reconstruction results as illustrated in Figure \ref{fig:CT_different_reg}, as not all choices of $g$ are a good representation of the desired ground-truth image. Here, learned regularizers have proven very successful, as the data itself can now be used to represent the regularizer and hence naturally offer a good representation of the desired features. Depending on the choice of representation, analysis of the learned regularizer may become more involved. In the following, we will discuss several choices for learned data-driven regularizers and how these can be used within the realm of variational regularization. 


\subsection{Learning a regularizer}
The idea to learn a regularizer from data, rather than the classical approach of modeling it from first principles as outlined above, has appeared in the literature in various forms. We outline here a few such approaches, ranging from relatively older yet widely popular ideas like dictionary learning to the more recent approaches to learning regularizers using deep neural networks.
\subsubsection{Learning sparsity-promoting dictionaries}
We start with the concept of dictionary learning, which nicely illustrates how data can be used to learn a representation of the desired images. 
Here, we will use the concept of sparsity, which has long been important for modeling prior knowledge of solutions, to regularize inverse problems. Assuming that the reconstruction possesses a sparse representation in a given dictionary $\Dictionary$, one can develop sparse recovery strategies, associated computational approaches, and error estimates for the reconstruction. Instead of working with a given dictionary, the key idea is to \textit{learn} a dictionary either \textit{a-priori} or \textit{jointly} with the reconstruction. Notably, almost all work on dictionary learning in sparse models has been carried out in the context of denoising, i.e., with  $A=\id$. 


Learning the dictionary separately to solve the reconstruction problem is usually done using a sparsity assumption on the representation given by the dictionary. Let $\NNLoss_X : \RecSpace \times \RecSpace \to \Real$ be a given loss function (e.g.\ the $\Lp^2$- or $\Lp^1$-norm). Further, let $\signal_1, \ldots, \signal_N \in \RecSpace$ be the given unsupervised training data, $\Dictionary = \SAset{\phi_i}\subset\RecSpace $ a dictionary, and the synthesis operator $\SynthesisOp_{\Dictionary}:\EncSpace \to\RecSpace$ acting on the encoder space $\EncSpace$ given as $\SynthesisOp_{\Dictionary}(\EncCoeff) = \sum_i \EncCoeff_i \phi_i $ for $\EncCoeff\in\EncSpace$. One approach in dictionary learning is based on the idea of finding a dictionary that approximates the training data in the given loss function with the sparsest possible coefficients, by solving 
\begin{equation}
\label{eq:dictionary_learning-ver1}
	( \widehat{\Dictionary},\EncCoeffEst_{i} ) \in \argmin_{\EncCoeff_i\in\EncSpace,\Dictionary\subset\RecSpace}\sum_{i=1}^N \NNLoss_X (\signal_i,\SynthesisOp_{\Dictionary}(\EncCoeff_i) ), \text{\,such that\,}\| \EncCoeff_i \|_0\leq s, \text{\,for\,}1\leq i\leq N,
\end{equation}
where $s$ is a given sparsity level. Alternatively, one can formulate the optimality criterion by looking for a dictionary that maximizes the sparsity of the dictionary representation while enforcing a constraint on the precision in which the synthesis operation approximates, i.e., by seeking a dictionary such that $\NNLoss_X (\signal_i,\SynthesisOp_{\Dictionary}(\EncCoeff_i) ) \leq \epsilon$ for $i = 1,\ldots,N$, while maximizing sparsity.
A unified formulation is given by the unconstrained problem
\begin{equation}
	( \widehat{\Dictionary},\EncCoeffEst_{i} ) = \argmin_{\EncCoeff_i\in\EncSpace,\Dictionary\subset\RecSpace}\sum_{i=1}^N [ \NNLoss_X (\signal_i,\SynthesisOp_{\Dictionary}(\EncCoeff_i) ) +\theta \| \EncCoeff_i \|_0 ].
	\label{eq:dictionary_learning-unified0}
\end{equation}
Both \eqref{eq:dictionary_learning-ver1} and \eqref{eq:dictionary_learning-unified0} are posed in terms of
the $\Lp^0$-norm and are NP-hard problems. This suggests the use of \emph{convex relaxation}, by  replacing $ \| \EncCoeff_i \|_0 $ with $\| \EncCoeff_i \|_1$ in \eqref{eq:dictionary_learning-unified0}. This relaxation turns \eqref{eq:dictionary_learning-unified0} into a \textit{bi-convex problem} (convex in each variable when the others are kept fixed) subject to usual choices for $\NNLoss_X$, and one can apply alternating minimization approaches for obtaining an approximate solution. 
Seminal work on sparse dictionary learning includes the K-SVD approach \cite{Aharon:2006aa}, geometric multi-resolution analysis (GMRA) \cite{Allard:2012aa}, and online dictionary learning \cite{Mairal:2010aa}. See also \cite{Rubinstein:2010aa} and references therein for a thorough discussion on sparse dictionary learning approaches.  

While dictionary learning in the context of sparse coding has been very popular and successful, there are still several issues with it related to the locality of learned structures and the computational effort needed, for instance when sparse coding is performed over a large number of images or image patches.
Consequently, a computationally feasible approach is needed that introduces further structure and invariances in the dictionary (e.g.,\ shift-invariance), which makes each dictionary atom $\phi_i$ dependent on the whole image instead of just individual patches. In this context, 
convolutional dictionaries have been introduced. Here, the dictionary atoms are given by convolution kernels that act on signal features via convolution
and hence provide computationally feasible shift-invariant dictionaries, where the atoms depend on the entire signal/image.

Convolutional dictionary learning is formulated as follows. Given unsupervised training data $\signal_1,\ldots,\signal_N \in \RecSpace$ and a loss function $\NNLoss_X : \RecSpace \times \RecSpace\to \RecSpace$, one seeks to solve
\begin{equation}\label{eq:csclearning}
	\argmin_{\phi_i,\EncCoeff_{j,i}\in\RecSpace}\Biggl\{ 
	\sum_{j=1}^N \NNLoss_X\biggl(\signal_j,\sum_i \EncCoeff_{j,i} \ast \phi_i \biggr)  +\RegParam \sum_{j=1}^N \sum_i  \| \EncCoeff_{j,i} \|_1\Biggr\},
\end{equation}
where $\| \phi_i \|_2 = 1$. 
The above can be solved using an ADMM-type scheme, similar to what is done for the $L^2$-loss in \cite{Garcia-Cardona:2017aa}. There are various extensions of convolutional dictionary learning, for instance, multi-layer variants \cite{Sulam:2017aa}.

The dictionary can also be learned jointly with the reconstruction, by formulating a joint optimization problem. An example of such an approach is the adaptive dictionary-based statistical iterative reconstruction (ADSIR) \cite{Zhang:2016aa}, and its variants \cite{Xu:2012aa,Chun:2017aa}. A joint problem could be formulated as:
\begin{equation}\label{eq:sparselanddictionarylearn}
	\min_{\signal\in\RecSpace,\EncCoeff_i\in\EncSpace,\Dictionary} 
	\{ f(\ForwardOp\signal,\data )+ \RegFunc_{\lambda}(\signal,\EncCoeff_1,\ldots,\EncCoeff_N,\Dictionary) \} ,
\end{equation}
where
\begin{equation}
	\RegFunc_{\lambda}(\signal,\EncCoeff_1,\ldots,\EncCoeff_N,\Dictionary) :=\sum_{j=1}^N  \left[ \NNLoss_X (\signal_i,\SynthesisOp_{\Dictionary}(\EncCoeff_i) ) + \lambda \|\EncCoeff_j \|_p^p \right] ,
\end{equation}
while $\SynthesisOp_{\Dictionary}:\EncSpace\to\RecSpace$ being the synthesis operator associated with the dictionary $\Dictionary$. A convergent regularization could now be obtained under suitable conditions on $g_\lambda$ following the variational regularization framework.

Finally, a formulation in infinite dimensional spaces is studied in \cite{Chambolle:2018aa}, proposing a convex variational model for joint reconstruction and dictionary learning, that applies to inverse problems and allows to establish existence and stability guarantees for the reconstruction.
\subsubsection{Bilevel learning} 
Starting from variational regularization methods where the reconstruction operator $\RecOp_{\RegParam} \colon \DataSpace \to \RecSpace$ is defined as the solution map for \eqref{eqn:varrecon_eq}, one can formulate a generic setup for learning selected components of \eqref{eqn:varrecon_eq} utilizing supervised training data and a suitable loss function $\NNLoss_X \colon \RecSpace \times \RecSpace \to \Real$.
This setup can be tailored towards learning the regularization functional $\RegFunc_\lambda$ \cite{de2017bilevel,de2022bilevel}, the data fidelity term $\DataDiscrep$, or even an appropriate component in the forward operator $\ForwardOp$, e.g., in blind image deconvolution \cite{hintermuller2015bilevel}. Notably, the joint dictionary learning problem \eqref{eq:sparselanddictionarylearn} can also be formulated as a \textit{bilevel learning problem}.

First, we generalize the regularizer $\RegFunc_\lambda$ consisting of a single regularization parameter $\RegParam$ to a set of parameters $\NNparam$ (vector-valued). Subsequently, we define the reconstruction operator as 
\begin{equation}\label{eq:VarRegBiGeneral}
	\RecOp_{\NNparam}(\data)
	:= \argmin_{\signal \in \RecSpace} \{ 
	f(\ForwardOp\signal,\data ) + \RegFunc_{\NNparam}(\signal) 
	\}  
	\quad \text{for $\data \in\DataSpace$.}
\end{equation}
Given paired training data $(\signal_i , \data_i) \in  \RecSpace  \times \DataSpace$ that are i.i.d. samples of the $(\RecSpace  \times \DataSpace)$-valued random variable $(\stsignal,\stdata) \sim \pi_\mathrm{joint}$, we can formulate the following \emph{bi\-level learning} problem:
\begin{equation}\label{eq:bilevel}
	\begin{cases}   
		\widehat{\theta} \in \displaystyle{\argmin_{\theta}\,  \Expect_{(\stsignal,\stdata) \sim \pi_\mathrm{joint}} [ \NNLoss_X (\RecOp_{\theta}(\stdata),\stsignal ) ], \text{\,\,where}} &   
		\\
		\RecOp_{\theta}(\data)
		:= \displaystyle{\argmin_{\signal \in \RecSpace}} \{ 
		f(\ForwardOp(\signal),\data )
		+ \RegFunc_{\theta}(\signal) 
		\}.                                                                                                                                                                                 &   
	\end{cases}
\end{equation}
Note that $\widehat{\theta}$ here is a Bayes estimator. However, the true joint distribution $\pi_\mathrm{joint}$ is typically unknown and is replaced by its empirical counterpart given by the training data, in which case $\widehat{\theta}$ corresponds to \emph{empirical risk minimization}.

In the bilevel optimization literature, as in the optimization literature as a whole, there are two main and mostly distinct approaches. In the discrete approach that first discretizes the problem \eqref{eq:VarRegBiGeneral} and subsequently optimizes its parameters. In this way, optimality conditions and their well-posedness are derived in finite dimensions. Alternatively, $\RecOp$ and its parameter $\MLrecoparam$ in~\eqref{eq:bilevel} are optimized in the continuum (\ie\ appropriate infinite-dimen\-sional function spaces) and then discretized. It should be noted that the resulting problems present several difficulties due to the frequent non-smoothness of the lower-level problem (think of \ac{TV} regularization), which, in general, makes it impossible to verify Karush--Kuhn--Tucker constraint qualification conditions. This issue has led to the development of alternative analytical approaches in order to obtain first-order necessary optimality conditions \cite{DeLosReyes2011,hintermuller2014elliptic}.

\subsubsection{Adversarial regularization} Another notable alternative approach to include a learned regularization in the reconstruction process is to learn an explicit regularization term in \eqref{eqn:varrecon_eq} and solve the variational problem subsequently. One such option is to learn \textit{adversarial regularizers} as first proposed in \cite{ar_nips} and further developed in \cite{acr_arxiv}. Here, the construction of data-driven regularization is inspired by how discriminative networks (also referred to as \textit{critics}) are trained using modern Generative Adversarial Network (GAN) architectures. 

To train such an adversarial regularizer, we assume to have $\left\{x_i\right\}_{i=1}^{n_1}\in {X}$ and $\left\{y_i\right\}_{i=1}^{n_2}\in {Y}$, which are i.i.d. samples from the marginal distributions $\pi_x$ and $\pi_y$ of ground-truth images and measurement data, respectively. It is important to note here that the training samples are unpaired, i.e., $y_i$ does not necessarily correspond to the noisy measurement of $x_i$, unlike, for instance, a supervised approach such as the learned primal-dual (LPD) method \cite{lpd_tmi}. 
Additionally, we assume that there exists a (potentially regularizing) pseudo-inverse $\ForwardOp^\dagger \colon {Y} \to {X}$ to the forward operator $\ForwardOp$ and define the measure ${\pi}_\dagger \in \PClass_{{X}}$ as ${\pi}_\dagger := \ForwardOp^\dagger_\# (\pi_{\mathrm{data}})$ for $\pi_{\mathrm{data}} \in \PClass_{{Y}}$. 

Then, the idea is to train a regularizer $\RegFunc_{\NNparam}$, parametrized by a neural network, to discriminate between the distributions $\pi_x$ and ${\pi}_\dagger$, the latter representing the distribution of imperfect solutions $\ForwardOp^\dagger y_i$.
More concretely, we compute
\begin{equation}\label{eq:ARGenericTraining}
	\RegFunc_{\widehat{\NNparam}} \colon {X} \to \Real 
	\quad\text{where}\quad
	\widehat{\NNparam} \in \argmin_{\NNparam} L(\NNparam),
\end{equation}
where $L(\NNparam)$ is chosen to be a Wasserstein-flavored loss functional \cite{ar_nips}. In particular, one minimizes 
\begin{equation}
	\label{eq:Wasserstein_Loss_numeric}
	L(\NNparam) := 
	\Expect_{\mathbf{x} \sim \pi_x} \left[\RegFunc_{\NNparam}(\mathbf{x}) \right]
	-
	\Expect_{\mathbf{x} \sim {\pi}_\dagger } \left[\RegFunc_{\NNparam}(\mathbf{x})  \right] 
	+ \lambda\, \Expect_{\mathbf{x}\sim\widetilde{\pi}} \left[ \bigl( \|\nabla \RegFunc_{\NNparam}(\mathbf{x})\| - 1 \bigr)_+^2  \right].
\end{equation}
Here, $\widetilde{\pi}$ denotes the distribution of the random variable $\mathbf{u}=\epsilon\,\mathbf{x}+(1-\epsilon)\mathbf{z}$, where $\mathbf{x}\sim\pi_x$, $\mathbf{z}\sim\pi_{\dagger}$, and $\epsilon$ is drawn uniformly at random from $[0,1]$. The heuristic behind this choice is that a regularizer trained this way will penalize noise and artifacts generated by the pseudo-inverse (and contained in ${\pi}_\dagger$). The term penalizing the gradient norm of $g_\theta$ in \eqref{eq:Wasserstein_Loss_numeric} encourages $g_{\theta}$ to be approximately 1-Lipschitz, which is required for the well-posedness of \eqref{eq:Wasserstein_Loss_numeric} and the stability of the variational solution obtained using the regularizer resulting from \eqref{eq:ARGenericTraining}. When used as a regularizer, it will hence prevent these undesirable features from occurring as a result of adversarial training. The resulting regularizer $\RegFunc_{\widehat{\NNparam}}$ is called an adversarial regularizer (AR). Note that in practical applications, the measures $\pi_x, {\pi}_\dagger \in \PClass_{{X}}$ are replaced with their empirical counterparts given by training data $x_i$ and $\ForwardOp^\dagger y_i$, respectively.

Suppose, one computes a gradient step on the learned regularizer, given by $x_\eta=x-\eta\,\nabla_\mathbf{x}\RegFunc_{\widehat{\NNparam}}(\mathbf{x})$, starting from $x\sim \pi_{\dagger}$. Let ${\pi}^\eta_\dagger$ be the distribution of $x_\eta$. Under appropriate regularity assumptions on the Wasserstein distance $\mathcal{W}({\pi}^\eta_\dagger,\pi_x)$ (see \cite[Theorem 1]{ar_nips}), one can show that 
\[
	\frac{d}{d\eta} \mathcal{W}({\pi}^\eta_\dagger,\pi_x) |_{\eta=0} = - \Expect_{\mathbf{x}\sim{\pi}_\dagger} \|\nabla_\mathbf{x}\RegFunc_{\widehat{\NNparam}}(\mathbf{x})\|^2.
\]
This ensures that by taking a small enough gradient step, one can reduce the Wasserstein distance from the ground truth $\pi_x$. This is a good indicator that using $\RegFunc_{\widehat{\NNparam}}$ as a variational regularization term and consequently penalizing it indeed introduces the highly desirable incentive to align the distribution of regularized solutions with the distribution $\pi_x$ of ground truth samples.
Further, one can show that if the AR is Lipschitz-continuous\footnote{1-Lipschitz continuity is approximately enforced by the gradient penalty term in \eqref{eq:Wasserstein_Loss_numeric}, which does not guarantee, however, that the (AR) is Lipschitz continuous. This property can be enforced by choosing the right network architecture. Indeed, all convolutional neural networks with RELU activations are Lipschitz continuous for some Lipschitz constant $L$, which might be arbitrarily large.}, then a minimizer of the following variational problem exists 
\begin{equation}\label{eq:ARvarcoerc}
	f(y^\delta,\ForwardOp x) + \lambda \left(\RegFunc_{\widehat{\NNparam}}(x) + \epsilon\|x\|_{X}^2\right),
\end{equation}
where the squared norm on $x$ is needed to enforce coercivity. 

Additionally, we can enforce (strong) convexity on $\RegFunc_\NNparam$, leading to the \textit{adversarial convex regularizer} (ACR), to achieve stronger forms of convergence while precluding discontinuities in the reconstruction operator. This necessitates a suitable parameterization of the learned regularizer. One such option is given by input convex neural networks for imposing convexity \cite{amos2017input} on $\RegFunc_{\widehat{\NNparam}}$. 
Given a so-constructed (ACR) $\RegFunc_{\widehat{\NNparam}}$ that is convex in $x$, we then consider a similar regularization functional of the form
\begin{equation}
	\RegFunc(x) = \RegFunc_{\widehat{\NNparam}}(x) + \epsilon \left\|x\right\|_{{X}}^2,
	\label{eq:ACR}
\end{equation}
where $\RegFunc_{\widehat{\NNparam}}:{X}\rightarrow\mathbb{R}$ is the trained (ACR) which we assume to be 1-Lipschitz and convex in $x$. The corresponding variational regularization problem then consists in minimizing
\begin{equation}
	f(y^\delta,\ForwardOp x)+\lambda \RegFunc(x),
	\label{eq:var_loss_maindef1}
\end{equation}
with respect to $x\in{X}$. In this setting, we get the following set of improved theoretical guarantees for the ACR, by following standard arguments in variational calculus for the proofs.
\begin{theorem}[Properties of Adversarial Convex Regularizer \cite{acr_arxiv}] \label{thm:ACR}
	\begin{itemize}
		\item[i.] Existence and uniqueness:
		      The functional in \eqref{eq:var_loss_maindef1} is strongly convex in $x$ and has a unique minimizer $\widehat{x}_{\lambda}\left(y\right)$ for every $y\in {Y}$ and $\lambda>0$.
		      		      		      		      		      		      		      		      		      		      		      		      		      		      		      		      		      		      		      		      		      		      		      		      		          
		\item[ii.] Stability: The optimal solution $\widehat{x}_{\lambda}\left(y\right)$ is continuous in $y$.
		      		      		      		      		      		      		      		      		      		      		      		      		      		      		      		      		      		      		      		      		      		      		      		      		        
		\item[iii.] Convergence: 
		      For $\delta\rightarrow 0$ and $\lambda(\delta) \rightarrow 0$ such that $\displaystyle\frac{\delta}{\lambda(\delta)}\rightarrow 0$, we have that $\widehat{x}_{\lambda}\left(y^{\delta}\right)$ converges to the $\RegFunc$-minimizing solution $x^{\dagger}$ given in \eqref{eq:R_min_def}.
	\end{itemize}
\end{theorem}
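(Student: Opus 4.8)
The three claims are the standard trio of properties for a variational regularization scheme, and the presence of the coercive, strongly convex term $\epsilon\|x\|_X^2$ makes each accessible by classical direct-method arguments. Throughout I would write $J_{y,\lambda}(x):=f(y,Ax)+\lambda g(x)$ with $g(x)=g_{\widehat\theta}(x)+\epsilon\|x\|_X^2$ and take the quadratic fidelity $f(y,Ax)=\|Ax-y\|_Y^2$. For (i), I would first observe that $x\mapsto f(y,Ax)$ is convex (a convex function precomposed with the bounded linear map $A$), that $g_{\widehat\theta}$ is convex by assumption, and that $\lambda\epsilon\|x\|_X^2$ is $2\lambda\epsilon$-strongly convex; hence $J_{y,\lambda}$ is strongly convex, in particular strictly convex and coercive. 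Since $g_{\widehat\theta}$ is $1$-Lipschitz (hence continuous) and the other terms are continuous, $J_{y,\lambda}$ is proper, convex and lower semicontinuous, therefore weakly lower semicontinuous; coercivity together with weak lower semicontinuity yields existence of a minimizer by the direct method, and strict convexity yields uniqueness. This produces the well-defined map $y\mapsto\widehat x_\lambda(y)$.

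For (ii) I would exploit strong convexity quantitatively. Fix $\lambda>0$, let $y_n\to y$ and set $x_n:=\widehat x_\lambda(y_n)$, $x:=\widehat x_\lambda(y)$. The uniform bound $J_{y_n,\lambda}(x_n)\le J_{y_n,\lambda}(0)$ together with the coercive term bounds $\{x_n\}$ in $X$. Strong convexity of $J_{y,\lambda}$ at its minimizer $x$ gives $\tfrac{\mu}{2}\|x_n-x\|_X^2\le J_{y,\lambda}(x_n)-J_{y,\lambda}(x)$ with $\mu=2\lambda\epsilon$; I would then bound the right-hand side by inserting the optimality $J_{y_n,\lambda}(x_n)\le J_{y_n,\lambda}(x)$ and rewriting the differences $J_{y,\lambda}(x_n)-J_{y_n,\lambda}(x_n)$ and $J_{y_n,\lambda}(x)-J_{y,\lambda}(x)$ purely in terms of $\|Ax_n-y\|_Y^2-\|Ax_n-y_n\|_Y^2$ and the analogous expression at $x$. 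Using polarization and the established boundedness of $\{x_n\}$, these cross terms vanish as $y_n\to y$, so $\|x_n-x\|_X\to0$, which is continuity.

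The real work is (iii), which I would prove by the classical limiting argument. Abbreviating $\widehat x:=\widehat x_{\lambda(\delta)}(y^\delta)$ and comparing against the feasible competitor $x^\dagger$ (which satisfies $Ax^\dagger=y^0$), optimality gives
\[
f(y^\delta,A\widehat x)+\lambda g(\widehat x)\le f(y^\delta,Ax^\dagger)+\lambda g(x^\dagger),
\]
and since $Ax^\dagger=y^0$ with $\|y^\delta-y^0\|_Y\le\delta$ the competitor's misfit $f(y^\delta,Ax^\dagger)$ is $O(\delta^2)$, so the parameter rule makes $f(y^\delta,Ax^\dagger)/\lambda\to0$. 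Dividing by $\lambda$ and dropping the nonnegative fidelity term gives $g(\widehat x)\le g(x^\dagger)+f(y^\delta,Ax^\dagger)/\lambda$, which stays bounded as $\delta\to0$; since the $1$-Lipschitz property gives $g(x)\ge\epsilon\|x\|_X^2-\|x\|_X+g_{\widehat\theta}(0)$, this bounds $\widehat x$ in $X$, yielding a weakly convergent subsequence $\widehat x\rightharpoonup\bar x$. Dropping instead the term $\lambda g(\widehat x)$ (bounded below) shows $f(y^\delta,A\widehat x)\to0$, hence $A\bar x=y^0$ by weak continuity of $A$, so $\bar x$ is feasible. Weak lower semicontinuity of $g$ and the bound above give $g(\bar x)\le\liminf g(\widehat x)\le g(x^\dagger)$, and feasibility forces equality, so $\bar x$ is a $g$-minimizing solution; strong convexity makes it the \emph{unique} such solution $x^\dagger$.

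The main obstacle, and the step demanding care, is upgrading weak to strong convergence. Here I would use that $g(\widehat x)\to g(\bar x)$ together with the \emph{separate} weak lower semicontinuity of $g_{\widehat\theta}$ and of $\|\cdot\|_X^2$ to argue that no energy is lost in the quadratic term, i.e.\ that $\liminf$ and $\limsup$ of $\|\widehat x\|_X^2$ are both pinned to $\|\bar x\|_X^2$, whence $\|\widehat x\|_X\to\|\bar x\|_X$; combined with $\widehat x\rightharpoonup\bar x$ in the Hilbert space $X$ this forces $\widehat x\to\bar x=x^\dagger$ strongly. Finally, a subsequence-of-every-subsequence argument, using uniqueness of $x^\dagger$, promotes this subsequential convergence to convergence of the whole family, completing the proof.
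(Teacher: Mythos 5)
Your proof is correct, and it is worth noting that the paper contains no proof of this theorem at all: the result is imported from \cite{acr_arxiv} with the remark that it follows ``by following standard arguments in variational calculus'', and your argument is precisely that standard route (direct method plus strict convexity for (i), the quantitative strong-convexity estimate with vanishing cross terms for (ii), and the classical comparison against the feasible competitor $x^{\dagger}$, weak compactness, weak lower semicontinuity, and a Radon--Riesz upgrade for (iii)), matching the proof strategy of the cited work. Two minor points. First, you tacitly specialize to the quadratic fidelity $f(y,Ax)=\|Ax-y\|_Y^2$, whereas the theorem is stated for a general loss $f$: your estimate $f(y^{\delta},Ax^{\dagger})=O(\delta^2)$ and the resulting sufficiency of $\delta^2/\lambda\to 0$ (which indeed follows from $\delta\to 0$ and $\delta/\lambda\to 0$) depend on this choice, and for a general fidelity one needs corresponding convexity and continuity hypotheses together with a bound on $f(y^{\delta},Ax^{\dagger})$ in terms of $\delta$, which is what the stated rule $\delta/\lambda\to 0$ is tailored to. Second, your weak-to-strong upgrade via separate lower semicontinuity of $g_{\widehat{\theta}}$ and of $\epsilon\|\cdot\|_X^2$ is valid, but since $g=g_{\widehat{\theta}}+\epsilon\|\cdot\|_X^2$ is $2\epsilon$-strongly convex there is a shorter route: for any $\xi\in\partial g(x^{\dagger})$ one has $g(\widehat{x})\ge g(x^{\dagger})+\langle\xi,\widehat{x}-x^{\dagger}\rangle+\epsilon\|\widehat{x}-x^{\dagger}\|_X^2$, and weak convergence $\widehat{x}\rightharpoonup x^{\dagger}$ kills the middle term while $g(\widehat{x})\to g(x^{\dagger})$ (which you already established from the $\limsup$ bound and weak lower semicontinuity) forces $\|\widehat{x}-x^{\dagger}\|_X\to 0$ directly; combined with uniqueness of $x^{\dagger}$, this also streamlines your final subsequence-of-subsequences step.
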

Theoretical guarantees notwithstanding, the numerical experiments in \cite{acr_arxiv} (especially, for sparse-view CT reconstruction) indicate a lack of expressive power of ACRs as compared to their nonconvex counterpart AR. This underscores the need to develop techniques that achieve a better compromise between empirical performance and theoretical certificates.    



\subsubsection{The Network Tikhonov (NETT) approach}
Traditionally, regularizers are often chosen as sparsifying transforms with respect to certain features. For instance, total variation (TV) is sparsifying for piecewise constant functions. Similarly, neural networks are often trained in an encoder-decoder (autoencoder) structure, where the encoder is trained to represent the input signal in a low-dimensional space or to find a more efficient, i.e., a sparse structure.
The approach proposed as Network Tikhonov (NETT) in  \cite{nett_paper} follows this paradigm to learn a regularizer. Here, a pretrained network $\Encoder_{\NNparam} \colon {X} \to \EncSpace$ is composed with a regularization functional $\RegFunc \colon \EncSpace \to [0,+\infty]$, such that $\RegFunc \circ \Encoder_{\NNparam} \colon {X} \to [0,+\infty]$ takes small values for desired model parameters and penalizes (by producing larger values for) model parameters with artifacts or other unwanted structures. 
The deep neural network $\Encoder_{\NNparam}$ in this approach is allowed to be a rather general architecture, such as the above-mentioned autoencoder. 
Once trained, the reconstruction is then given as the minimizer of the variational objective
\begin{equation}\label{eq:NETT:func}
	{J}_{\NNparam}(x) := 
	f(y^\delta,\ForwardOp x) + \lambda \RegFunc\bigl( \Encoder_{\NNparam} (x) \bigr).
\end{equation}
Indeed, the NETT approach also provides a provably convergent regularization method under certain analytic conditions on \eqref{eq:NETT:func}, such as weak lower semi-continuity and coercivity of the regularizer $\RegFunc(\Encoder_{\NNparam} (\cdot))$. The primary difference to the ACR in \eqref{eq:ACR}, which achieves convergence in the strong topology of ${X}$ by enforcing convexity, is that the NETT idea achieves convergence in the weak topology of ${X}$. The weak lower semi-continuity and coercivity of $\RegFunc(\Encoder_{\NNparam} (\cdot))$ can be achieved as follows. First, the usual ReLU activation function is replaced by leaky ReLU defined with a small $\tau >0$ as 
\[ 
	{\ell \mathrm{ReLU}}_\tau (s) :=  \max (\tau s, s),
\]
which tends to $- \infty$ for $s \to - \infty$. In combination with the affine linear maps (weight matrices) in $\Encoder_{\NNparam}$, this yields a coercive and weakly lower semi-continuous regularization function 
$\RegFunc \circ \Encoder_{\NNparam}$ for standard choices of $\RegFunc$, such as weighted $\ell_p$-norms $\RegFunc(\xi)= \sum_i v_i |\xi|^p$, with uniformly positive weights $v_i$ and $p \geq 1$.
Finally, we note that strong convergence can be achieved by introducing the novel concept of absolute Bregman distances and imposing stronger conditions on the regularizer.


\section{Regularization by Plug-and-play (PnP) denoising}
\label{sec:Reg_byPnP}
Denoising is the simplest and arguably the most well-studied inverse problem in imaging, with numerous algorithms developed over the past few decades, particularly for removing additive white Gaussian noise from images. It is, therefore, natural to ask if one can leverage off-the-shelf denoisers for solving more complicated image recovery tasks with a non-trivial forward operator. Venkatakrishnan et al. \cite{venkat_pnp_6737048} pioneered the idea of using denoisers within proximal splitting algorithms (e.g., the alternating directions method of multipliers (ADMM) algorithm) in a plug-and-play (PnP) fashion, and the resulting class of algorithms came to be known as the PnP denoising approach. To see the motivation behind using denoisers in place of proximal operators, let us recall the definition of the proximal operator with respect to a (potentially non-smooth) convex functional $g:X\to \mathbb{R}\cup \{+\infty\}$ and a step-size $\tau>0$:   
\begin{align}
	\prox_{\tau\, g}(x) & = \argmin_u \, \frac{1}{2}\|x - u\|^2 + \tau\, g(u). 
	\label{eq:prox_1}
\end{align}
As indicated by \eqref{eq:prox_1}, evaluating the proximal operator amounts to denoising a noisy image $x$ using the Bayesian \textit{maximum a-posteriori probability} (MAP) estimation framework with a Gibbs prior $\propto \exp\left(-\tau\,g(u)\right)$. This denoising interpretation of proximal operators underlies the foundation of PnP approaches, which have been shown to produce excellent reconstruction results for a wide range of imaging inverse problems. A classic and widely popular example of PnP denoising would be to consider it in conjunction with forward-backward splitting (FBS), leading to the following iterative reconstruction algorithm:
\begin{align}
	x_{k+1} = D_{\sigma}\left(x_k - \eta_k\,\nabla f(x_k)\right). 
	\label{eq:pnp_fbs}                             \end{align}
Here, $f$ denotes the data fidelity loss for the underlying inverse problem, $\eta_k>0$ is the step-size at iteration $k$, and $D_{\sigma}$ is a denoiser that eliminates Gaussian noise of standard deviation $\sigma$ from its input. 

Besides the PnP denoising framework within proximal methods, wherein a denoiser implicitly acts as a regularizer, Romano et al. \cite{romano2017RED} proposed an alternative approach to explicitly construct a regularizer as
\begin{align}
	g(x)=\frac{1}{2}x^\top \left(x-D_{\sigma}(x)\right), 
	\label{eq:red_construction}                         \end{align}
while utilizing a denoiser $D_{\sigma}(x)$. One can then seek to minimize the energy functional $f(x)+\lambda\,g(x)$, where $g$ is as defined in \eqref{eq:red_construction}, leading to fixed-point iterative schemes known as the regularization-by-denoising (RED) algorithms. Nevertheless, it was shown subsequently by Schniter et al. \cite{red_schniter} that the \textit{energy minimization} interpretation of the RED algorithms is valid only when (i) the denoiser is \textit{locally homogeneous}, i.e., $D_{\sigma}\left((1+\epsilon)x\right)=(1+\epsilon)D_{\sigma}(x)$ holds for all $x$ with sufficiently small $\epsilon$, and (ii) the Jacobian of $D_{\sigma}$ is symmetric. These conditions are generally not satisfied by generic denoisers, thereby invalidating the energy minimization-based interpretation of RED. Instead, the authors of \cite{red_schniter} developed a new framework called \textit{score-matching} to analyze the convergence of RED algorithms. 



Notwithstanding their empirical success, PnP denoising algorithms such as \eqref{eq:pnp_fbs} does not immediately inherit the convergence properties of the corresponding optimization scheme (in this specific instance, FBS). Studying the convergence of PnP denoising has received a significant amount of attention in the mathematical imaging community in recent years. Arguably, the most natural form of convergence for PnP algorithms of the form \eqref{eq:pnp_fbs} is the stability of the iterations, i.e., to ascertain whether the sequence of iterates $x_k$ generated by a PnP algorithm converges. Such convergence guarantees are typically derived from fixed point theorems, which require showing that the PnP iterations are contractive maps \cite{pnp_admm_chan_2017,pmlr-v97-ryu19a}. For instance, \cite{pmlr-v97-ryu19a} established the fixed-point convergence of PnP-ADMM (i.e., PnP with the \textit{alternating direction method of multipliers} algorithm) under the assumption of Lipschitz continuity of the operator $\left(D_{\sigma}-\id\right)$. The specific result is stated in Theorem \ref{thm:pnp_admm_fp}. 
\begin{theorem}[Fixed-point convergence of PnP-ADMM \cite{pmlr-v97-ryu19a}]\newline
	\label{thm:pnp_admm_fp}
	Consider the PnP-ADMM algorithm, given by
	\begin{align}
		x_{k+\frac{1}{2}} & =\prox_{\tau \, f}\left(z_k\right), x_{k+1} = D_{\sigma}\left(2x_{k+\frac{1}{2}}-z_k\right), \text{\,\,and\,\,}\nonumber \\ z_{k+1}&=z_k+x_{k+1}-x_{k+\frac{1}{2}},
		\label{pnp_drs1}
	\end{align}
	where the data-fidelity loss $f$ is assumed to be $\mu$-strongly convex. One can equivalently express \eqref{pnp_drs1} as the fixed-point iteration $z_{k+1}=\Op{T}(z_k)$, where
	\begin{eqnarray}
		\Op{T}=\frac{1}{2}\id + \frac{1}{2}\left(2D_{\sigma}-\id\right)\left(2\,\prox_{\tau \, f}-\id\right).
		\label{pnp_drs_fp}
	\end{eqnarray}
	Suppose, the denoiser satisfies
	\begin{equation}
		\left\|\left(D_{\sigma}-\id\right)(u)-\left(D_{\sigma}-\id\right)(v)\right\|_2 \leq \epsilon  \left\|u-v\right\|_2, 
		\label{cond_denoiser_pnpDRS}
	\end{equation}
	for all $u,v\in X$ and some $\epsilon>0$, and the strong convexity parameter $\mu$ is such that $\displaystyle\frac{\epsilon}{(1+\epsilon-2\epsilon^2)\,\mu}<\tau$ holds, the operator $\Op{T}$ is contractive and the PnP-ADMM algorithm is fixed-point convergent. That is, $\left(x_k,z_k\right)\rightarrow (x_{\infty},z_{\infty})$, where $(x_{\infty},z_{\infty})$ satisfy 
	\begin{eqnarray}
		x_{\infty}=\prox_{\tau \, f}\left(z_{\infty}\right) \text{\,\,and\,\,} x_{\infty} = D_{\sigma}\left(2x_{\infty}-z_{\infty}\right).
		\label{pnp_drs_final}
	\end{eqnarray}
	As noted in \cite{pmlr-v97-ryu19a}, fixed-point convergence of PnP-ADMM follows from monotone operator theory if $\left(2D_{\sigma}-\id\right)$ is non-expansive, but \eqref{cond_denoiser_pnpDRS} imposes a less restrictive condition on the denoiser.
\end{theorem}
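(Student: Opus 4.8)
The plan is to recognize the recursion \eqref{pnp_drs1} as a Douglas--Rachford fixed-point iteration and to show that the operator $\Op{T}$ in \eqref{pnp_drs_fp} is a Banach contraction; geometric convergence of $z_k$ to the unique fixed point, together with continuity of the two maps involved, then delivers \eqref{pnp_drs_final}.

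First I would verify \eqref{pnp_drs_fp} by direct substitution. Eliminating $x_{k+\frac12}$ and $x_{k+1}$ from \eqref{pnp_drs1} gives $z_{k+1}=z_k+D_{\sigma}\!\left(2\prox_{\tau f}(z_k)-z_k\right)-\prox_{\tau f}(z_k)$, and expanding $\frac12\id+\frac12(2D_{\sigma}-\id)(2\prox_{\tau f}-\id)$ reproduces exactly this expression, so $z_{k+1}=\Op{T}(z_k)$. Writing $R_f:=2\prox_{\tau f}-\id$ for the reflected resolvent and using $\frac12\id+\frac12 R_f=\prox_{\tau f}$, it is convenient to record the equivalent residual form
\[
	\Op{T}=\prox_{\tau f}+\left(D_{\sigma}-\id\right)\circ R_f ,
\]
which isolates the contribution of the denoiser through its residual $D_{\sigma}-\id$.

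Next I would assemble the two structural ingredients. Since $f$ is $\mu$-strongly convex, $\partial(\tau f)$ is $\tau\mu$-strongly monotone and its resolvent $\prox_{\tau f}$ obeys the sharpened inequality $\langle z-z',\,\prox_{\tau f}(z)-\prox_{\tau f}(z')\rangle\ge(1+\tau\mu)\,\|\prox_{\tau f}(z)-\prox_{\tau f}(z')\|^2$ for all $z,z'$. Writing $a:=\prox_{\tau f}(z)-\prox_{\tau f}(z')$ and $s:=z-z'$, this single estimate yields both the contraction $\|a\|\le\frac{1}{1+\tau\mu}\|s\|$ and, after expanding $\|R_f z-R_f z'\|^2=\|2a-s\|^2$, the refined bound $\|R_f z-R_f z'\|^2\le\|s\|^2-4\tau\mu\,\|a\|^2$. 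From the denoiser hypothesis \eqref{cond_denoiser_pnpDRS}, the residual $D_{\sigma}-\id$ is $\epsilon$-Lipschitz. Combining the residual form with these facts gives, for any two iterates,
\[
	\|\Op{T}(z)-\Op{T}(z')\|\le\|a\|+\epsilon\,\|R_f z-R_f z'\| ,
\]
and the remaining task is to bound the right-hand side by $c\,\|s\|$ with $c<1$.

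The main obstacle is the \emph{sharp} bookkeeping at this last step. Bounding the two terms independently, via $\|a\|\le\frac{1}{1+\tau\mu}\|s\|$ and $\|R_f z-R_f z'\|\le\|s\|$ (nonexpansiveness of the reflected resolvent for convex $f$), only yields the cruder contraction factor $\frac{1}{1+\tau\mu}+\epsilon$ and hence a more conservative step-size requirement. To recover the stated threshold one must retain the quadratic coupling $\|R_f z-R_f z'\|^2\le\|s\|^2-4\tau\mu\|a\|^2$ and maximize $\|a\|+\epsilon\sqrt{\|s\|^2-4\tau\mu\|a\|^2}$ over the admissible range $\|a\|\in[0,\tfrac{1}{1+\tau\mu}\|s\|]$; it is precisely here that strong convexity is genuinely exploited, and the resulting factor is strictly below $1$ under the condition $\tau>\frac{\epsilon}{(1+\epsilon-2\epsilon^2)\,\mu}$. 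Once $\Op{T}$ is a contraction, the Banach fixed-point theorem furnishes a unique $z_\infty$ with $z_k\to z_\infty$ geometrically. Continuity of $\prox_{\tau f}$ then gives $x_{k+\frac12}\to x_\infty:=\prox_{\tau f}(z_\infty)$, Lipschitz continuity of $D_{\sigma}$ (a consequence of \eqref{cond_denoiser_pnpDRS}) gives $x_{k+1}\to D_{\sigma}(2x_\infty-z_\infty)$, and letting $k\to\infty$ in $z_{k+1}=z_k+x_{k+1}-x_{k+\frac12}$ forces $x_{k+1}-x_{k+\frac12}\to0$, so both $x$-sequences share the limit $x_\infty$ and the fixed-point identities \eqref{pnp_drs_final} hold.
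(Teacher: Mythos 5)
The paper itself gives no proof of Theorem~\ref{thm:pnp_admm_fp} --- it is stated as a survey item with the proof deferred to \cite{pmlr-v97-ryu19a} --- and your argument reconstructs essentially the route of that original proof: the residual rewriting $\Op{T}=\prox_{\tau f}+(D_\sigma-\id)\circ(2\prox_{\tau f}-\id)$, the strongly monotone resolvent inequality $\langle z-z',a\rangle\geq(1+\tau\mu)\|a\|^2$, the refined reflected-resolvent bound $\|2a-s\|^2\leq\|s\|^2-4\tau\mu\|a\|^2$, a one-dimensional worst-case maximization, and then Banach's fixed-point theorem plus continuity of $\prox_{\tau f}$ and $D_\sigma$ to pass to \eqref{pnp_drs_final}. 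All steps check out; if anything, carrying out your maximization of $t+\epsilon\sqrt{\|s\|^2-4\tau\mu t^2}$ over $t\in[0,\|s\|/(1+\tau\mu)]$ exactly (splitting into the cases where the interior critical point does or does not lie in the admissible range) yields contraction under a condition slightly \emph{weaker} than $\tau>\epsilon/\bigl((1+\epsilon-2\epsilon^2)\mu\bigr)$, so your claim that the stated threshold is recovered ``precisely'' is a mild overstatement --- the theorem's condition is a clean sufficient bound that implies your sharper one, which is all that is needed. One implicit caveat worth flagging: the argument (like the statement it proves) requires $\epsilon<1$, since for $\epsilon\geq 1$ the factor $1+\epsilon-2\epsilon^2$ is nonpositive, the parameter condition becomes vacuous, and contractivity of $\Op{T}$ can fail.
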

While fixed-point convergence ensures that the PnP iterations are stable, the specific fixed point to which they converge does not automatically minimize a variational energy function. To bridge the gap between classical variational approaches and PnP methods, it is important to derive conditions under which the limit point of PnP iterations can be characterized as the minimizer (or, at least a stationary point) of some regularized variational objective (which, of course, depends on the denoiser). This type of convergence is referred to as \textit{objective convergence} and is stronger than fixed-point convergence. 

Objective convergence of PnP with classical (pseudo) linear denoisers (e.g., non-local means denoiser) has been established in \cite{kunal_9380942}. Hurault et al. \cite{gs_denoiser_hurault_2021} showed that PnP with a denoiser constructed as a gradient field (referred to as gradient-step (GS) denoisers) converges to the stationary point of a (possibly non-convex) variational objective (c.f. Theorem \ref{thm:pnp_admm_gsd}). The construction of GS denoisers is motivated by Tweedie's identity: the optimal minimum mean-squared error (MMSE) Gaussian denoiser is given by
\begin{equation}
	D_{\sigma}^*(x):=\mathbb{E}\left[\mathbf{x}_0|\mathbf{x}=x\right] = x+\sigma^2\,\nabla \log p_{\sigma}(x).
	\label{eq:tweedie}
\end{equation}
Here, $\mathbf{x}=\mathbf{x}_0+\sigma\,\mathbf{z}$, where $\mathbf{x}\sim\mathcal{N}(0,\id)$, is the Gaussian noise (with variance $\sigma^2$) corrupted version of the clean image $\mathbf{x}_0\in X\subseteq \mathbb{R}^d$ and 
\begin{equation}
	p_{\sigma}(x)=\frac{1}{(2\pi\sigma^2)^{\frac{d}{2}}}\int\exp\left(-\frac{\|x-x_0\|_2^2}{2\sigma^2}\right)p(x_0)\,\mathrm{d}x_0.
	\label{eq:smoothed_pdf}
\end{equation}
Indeed, the optimal Gaussian denoiser is of the form $D_{\sigma}^*(x)=x-\nabla\,g^{*}_{\sigma}(x)$, where $g^{*}_{\sigma}$ is the negative log of the smoothed distribution $p_{\sigma}$ defined in \eqref{eq:smoothed_pdf}, which has a structure identical to that of a GS denoiser. 

\begin{theorem}[Objective convergence of PnP iterations with gradient-step (GS) denoisers \cite{gs_denoiser_hurault_2021}]\newline
	\label{thm:pnp_admm_gsd}
	Suppose, the denoiser is constructed as a gradient-step (GS) denoiser, i.e., $D_{\sigma}=\id-\nabla g_{\sigma}$, where $g_{\sigma}$ is proper, lower semi-continuous, and differentiable with an $L$-Lipschitz gradient. The PnP algorithm proposed in \cite{gs_denoiser_hurault_2021} is given by
	\begin{align}
		x_{k+1} & = \prox_{\tau \, f}\left(x_k-\tau \,\lambda \,\nabla g_{\sigma}(x_k)\right)\nonumber \\ &=\prox_{\tau \, f}\circ \left(\tau\lambda\, D_{\sigma}+(1-\tau\lambda\, \id)\right)(x_k),
		\label{eq:pnp_gs_hqs}
	\end{align}
	where $f \colon X\to\Real\cup \{+\infty\}$ denotes data-fidelity and is assumed to be convex and lower semi-continuous. Then, the following guarantees hold for $\tau<\frac{1}{\lambda\, L}$:
	\begin{enumerate}
		\item The sequence $F(x_k)$, where $F=f+\lambda\, g_{\sigma}$, is non-increasing and convergent.
		\item $\left\|x_{k+1}-x_k\right\|_2 \to 0$, which indicates that iterations are stable, in the sense that they do not diverge if one iterates indefinitely. 
		\item All limit points of $\{x_k\}$ are stationary points of $F(x)$. 
	\end{enumerate}
	Notably, the PnP iteration defined by \eqref{eq:pnp_gs_hqs} is exactly equivalent to proximal gradient descent on $f+\lambda\,g_{\sigma}$, with a potentially non-convex $g_{\sigma}$.
\end{theorem}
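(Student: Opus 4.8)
The plan is to exploit the observation recorded at the end of the statement: the iteration \eqref{eq:pnp_gs_hqs} is exactly proximal gradient descent applied to $F = f + \lambda g_\sigma$, where the smooth part $h := \lambda g_\sigma$ has $\lambda L$-Lipschitz gradient and the (possibly nonsmooth) part $f$ is convex and lower semi-continuous. I would therefore organize the whole argument around a single \emph{sufficient-decrease} inequality and then read off the three conclusions from it, treating $g_\sigma$ as nonconvex throughout.

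First I would establish the sufficient-decrease estimate that drives everything. Writing $h := \lambda g_\sigma$, the descent lemma for the $\lambda L$-Lipschitz gradient $\nabla h$ bounds $h(x_{k+1})$ above by its first-order expansion at $x_k$ plus $\frac{\lambda L}{2}\|x_{k+1}-x_k\|^2$; separately, comparing the value of the proximal objective $u\mapsto\frac{1}{2\tau}\|u - x_k + \tau\nabla h(x_k)\|^2 + f(u)$ at its minimizer $x_{k+1}$ against its value at $u = x_k$ yields a matching inequality for $f$. Adding the two makes the inner-product terms cancel and leaves
\begin{equation*}
	F(x_{k+1}) \leq F(x_k) - \left(\tfrac{1}{2\tau}-\tfrac{\lambda L}{2}\right)\|x_{k+1}-x_k\|^2.
\end{equation*}
Since $\tau < \frac{1}{\lambda L}$, the constant $c := \frac{1}{2\tau}-\frac{\lambda L}{2}$ is strictly positive, giving monotone non-increase of $F(x_k)$; assuming $F$ is bounded below (guaranteed under mild coercivity of $g_\sigma$), the sequence $F(x_k)$ converges, which is claim~1. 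Telescoping the inequality yields $c\sum_k\|x_{k+1}-x_k\|^2 \leq F(x_0)-\inf F < \infty$, hence $\|x_{k+1}-x_k\|\to 0$, which is claim~2.

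For claim~3 I would pass to the first-order optimality condition of the proximal step, namely $\tfrac{1}{\tau}(x_k - x_{k+1}) - \nabla h(x_k) \in \partial f(x_{k+1})$, and add $\nabla h(x_{k+1})$ to both sides to manufacture the element
\begin{equation*}
	v_{k+1} := \tfrac{1}{\tau}(x_k - x_{k+1}) + \nabla h(x_{k+1}) - \nabla h(x_k) \in \partial f(x_{k+1}) + \nabla h(x_{k+1}) = \partial F(x_{k+1}).
\end{equation*}
By claim~2 together with the Lipschitz continuity of $\nabla h$, we have $v_{k+1}\to 0$. Taking a subsequence $x_{k_j}\to x_*$ (and noting $x_{k_j+1}\to x_*$ as well, again by claim~2), I would invoke the closedness of the subdifferential to conclude $0\in\partial F(x_*)$, so that $x_*$ is stationary.

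The main obstacle is precisely this last step: because $g_\sigma$ is allowed to be nonconvex, $F$ is nonconvex and one must work with the limiting (Mordukhovich) subdifferential, whose closedness requires not only $v_{k+1}\to 0$ and $x_{k_j+1}\to x_*$ but also the value convergence $F(x_{k_j+1})\to F(x_*)$. I would secure this by a sandwich argument: lower semi-continuity gives $F(x_*)\leq\liminf_j F(x_{k_j+1})$, while substituting the competitor $u = x_*$ into the defining inequality of the proximal step and passing to the limit (using continuity of $\nabla h$ to handle the quadratic terms) yields $\limsup_j f(x_{k_j+1})\leq f(x_*)$, hence $\limsup_j F(x_{k_j+1})\leq F(x_*)$. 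Combining the two bounds gives the required value convergence, after which the closedness property of $\partial F$ delivers $0\in\partial F(x_*)$ and completes the proof.
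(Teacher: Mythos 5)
The paper does not actually prove this theorem: it is stated as a survey item and attributed to \cite{gs_denoiser_hurault_2021}, so the relevant comparison is with the argument in that reference --- which your proposal reproduces essentially verbatim. Your sufficient-decrease inequality (descent lemma on $h=\lambda g_\sigma$ plus comparing the prox objective at $x_{k+1}$ and $x_k$, with cancellation of the cross terms), the telescoping step giving $\sum_k\|x_{k+1}-x_k\|^2<\infty$, and the stationarity argument via $v_{k+1}\in\partial F(x_{k+1})$, $v_{k+1}\to 0$ are exactly the standard nonconvex proximal gradient analysis on which the cited result rests, correctly adapted to the flipped splitting here (prox on the convex fidelity $f$, explicit gradient step on the possibly nonconvex $g_\sigma$). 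Two remarks. First, you are right that claim 1 requires $F$ to be bounded below; the theorem as stated in this paper omits that hypothesis, but the cited reference assumes it, so flagging it as an added assumption is the correct move rather than a gap in your argument. Second, your final worry is overcautious in this setting: because $f$ is convex and lower semi-continuous and $\nabla h$ is continuous, one has $\partial F = \nabla h + \partial f$ exactly, and the graph of the convex subdifferential $\partial f$ is closed under strong convergence of both points and subgradients, so $x_{k_j+1}\to x_*$ and $v_{k_j+1}\to 0$ already yield $0\in\partial F(x_*)$ without establishing $F(x_{k_j+1})\to F(x_*)$. The sandwich argument you give is nevertheless correct, and it is precisely what one would need if $f$ were merely lower semi-continuous and nonconvex, where the limiting subdifferential's closedness does require value convergence --- so it makes your proof slightly more general than necessary, at no cost to correctness.
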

While objective convergence ensures a one-to-one connection between PnP iterates with the minimization of a variational objective, it does not provide any guarantees about the regularizing properties of the solution that the iterates converge to. In the same spirit as classical regularization theory, it is therefore desirable to be able to control the implicit regularization effected by the denoiser in PnP algorithms and analyze the asymptotic behavior of the PnP reconstruction as the noise level and the regularization strength tend to vanish. More precisely, assuming that the PnP iterations converge to a solution $\hat{x}\left(y^\delta,\sigma,\lambda\right)$, where $\sigma$ is a parameter associated with the denoiser and $\lambda$ is an explicit regularization penalty, one would like to obtain appropriate selection rules for $\sigma$ and/or $\lambda$ such that $\hat{x}\left(y^\delta,\sigma,\lambda\right)$ exhibits convergence akin to \eqref{eq:R_min_def} in the limit as $\delta\rightarrow 0$. To the best of our knowledge, some progress in this direction was first made in \cite{ebner2022plugandplay}, and the precise convergence result is stated in Theorem \ref{thm:pnp_conv_reg_haltmeyer}.
\begin{theorem}[Convergent plug-and-play (PnP) regularization \cite{ebner2022plugandplay}]\newline
	\label{thm:pnp_conv_reg_haltmeyer}
	Consider the PnP-FBS iterates of the form
	\begin{equation}
		x_{\lambda,k+1}^{\delta} = D_{\lambda}\left(x_{\lambda,k}^{\delta}-\eta\,A^*\left(Ax_{\lambda,k}^{\delta}-y^{\delta}\right)\right),
		\label{eq:pnp_conv_reg_thm}
	\end{equation}
	where $D_{\lambda}$ is a denoiser with a tuneable regularization parameter $\lambda$. Let $\PnP\left(\lambda,y^{\delta}\right)$ be the fixed point of the PnP iteration \eqref{eq:pnp_conv_reg_thm}. For any $y\in\range(A)$ and any sequence $\delta_k>0$ of noise levels converging to $0$, there exists a sequence $\lambda_k$ of regularization parameters converging to $0$ such that for all $y_k$ with $\|y_k-y^0\|_2\leq \delta_k$, the following hold under appropriate assumptions on the denoiser (see Definition 3.1 in \cite{ebner2022plugandplay} for details):
	\begin{enumerate}
		\item $\PnP\left(\lambda,y^{\delta}\right)$ is continuous in $y^{\delta}$ for any $\lambda>0$;
		\item The sequence $\left(\PnP\left(\lambda_k,y_k\right)\right)_{k\in\mathbb{N}}$ has a weakly convergent subsequence; and
		\item The limit of every weakly convergent subsequence of $\left(\PnP\left(\lambda_k,y_k\right)\right)_{k\in\mathbb{N}}$ is a solution of the operator equation $y^0=Ax$.
	\end{enumerate}
\end{theorem}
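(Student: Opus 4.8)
The three assertions are precisely the three pillars of classical convergent regularization theory---stability, compactness, and consistency---transported to the fixed-point setting, so I would organize the proof along these lines. Throughout, write $T_{\lambda,y}(x) := D_\lambda(x - \eta\,A^*(Ax - y))$ for the PnP-FBS iteration map, so that $\PnP(\lambda,y)$ is by definition the fixed point $\PnP(\lambda,y) = T_{\lambda,y}(\PnP(\lambda,y))$. For assertion~1 (continuity), the plan is to first show that $T_{\lambda,y}$ is a contraction for each fixed $\lambda>0$. The gradient step $x \mapsto x - \eta\,A^*(Ax - y)$ is non-expansive for $0 < \eta \le \|A\|^{-2}$, since the eigenvalues of $\id - \eta\,A^*A$ then lie in $[0,1]$; composing with the denoiser, which the hypotheses of Definition~3.1 in \cite{ebner2022plugandplay} guarantee to be a contraction (or an averaged map with contraction factor $L_\lambda < 1$), yields a genuine contraction, so the fixed point is unique by the Banach fixed-point theorem. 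Continuity in $y$ is then the standard perturbation estimate for parametrized contractions: subtracting the two fixed-point identities and using that $T_{\lambda,y}$ is Lipschitz in $y$ gives
\[
	\|\PnP(\lambda,y_1) - \PnP(\lambda,y_2)\| \le \frac{\eta\,\|A\|\,\mathrm{Lip}(D_\lambda)}{1 - L_\lambda}\,\|y_1 - y_2\|.
\]

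For assertion~2 (weak compactness), the essential ingredient is a uniform a priori bound on $\{\PnP(\lambda_k,y_k)\}_k$. I would extract this by testing the fixed-point equation against a fixed reference solution $x^\dagger$ of $Ax = y^0$ (which exists since $y^0 \in \range(A)$) and invoking the boundedness/coercivity built into the denoiser assumptions. Once the sequence is bounded in the Hilbert space $X$, reflexivity (equivalently Banach--Alaoglu) immediately supplies a weakly convergent subsequence, giving assertion~2.

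For assertion~3 (consistency), which is the crux, I would rearrange the fixed-point equation for $\hat{x}_k := \PnP(\lambda_k,y_k)$, setting $w_k := \hat{x}_k - \eta\,A^*(A\hat{x}_k - y_k)$, to obtain the identity
\[
	\eta\,A^*(A\hat{x}_k - y_k) = (D_{\lambda_k} - \id)(w_k),
\]
so that the scaled residual on the left equals the denoising increment on the right. The assumptions on the family $\{D_\lambda\}$ ensure $(D_\lambda - \id)(w) \to 0$ as $\lambda \to 0$, uniformly over the bounded set containing $\{w_k\}$; this is exactly where the bound from assertion~2 and the coordinated choice $\lambda_k \to 0$ enter. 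Hence $A^*(A\hat{x}_k - y_k) \to 0$ strongly. Passing to a subsequence $\hat{x}_{k_j} \rightharpoonup \hat{x}$, using that $A^*A$ is weak-to-weak continuous and that $y_{k_j} \to y^0$ strongly (because $\|y_{k_j} - y^0\| \le \delta_{k_j} \to 0$), I arrive at the normal equation $A^*(A\hat{x} - y^0) = 0$, and since $y^0 \in \range(A)$ this forces $A\hat{x} = y^0$.

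The main obstacle is precisely the uniform vanishing of the denoising increment in assertion~3: one must verify that the quantitative hypotheses on $\{D_\lambda\}$ are strong enough to deliver $(D_{\lambda_k} - \id)(w_k) \to 0$ along a sequence $\lambda_k \to 0$ that can be chosen compatibly with the noise levels $\delta_k \to 0$, while simultaneously keeping $\{w_k\}$ bounded. This couples the compactness and consistency arguments, since the a priori bound of assertion~2 is needed to localize the residual estimate, yet that bound is itself drawn from the same denoiser assumptions. Carefully threading this dependence---and explicitly constructing the admissible parameter-choice sequence $\lambda_k$---is the delicate part of the argument.
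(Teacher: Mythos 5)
This theorem is quoted in the paper as a survey result from \cite{ebner2022plugandplay}, and the paper itself gives no proof; measured against the original proof in that reference, your outline follows essentially the same route — Banach fixed-point and contraction-perturbation arguments for existence and stability of $\PnP(\lambda,y)$, a uniform a priori bound plus weak sequential compactness of bounded sets in the Hilbert space $X$ for assertion 2, and the rearranged fixed-point identity $\eta\,A^*(A\hat{x}_k - y_k) = (D_{\lambda_k}-\id)(w_k)$ combined with $D_\lambda \to \id$ on bounded sets and weak-to-weak continuity of $A^*A$ for consistency (your reliance on a contractive denoiser is faithful to the cited hypotheses, which the paper itself flags as the restrictive feature of this result). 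The only cosmetic difference is in assertion 2, where the uniform bound follows most directly from the contraction estimate $\|\PnP(\lambda,y)\| \le \|T_{\lambda,y}(0)\|/(1-L_\lambda)$ with $T_{\lambda,y}(0) = D_\lambda(\eta\,A^*y)$, rather than by testing against a reference solution.
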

The result in \cite{ebner2022plugandplay}, although the first of its kind, makes fairly restrictive assumptions on the denoiser. In particular, the denoiser needs to be \textit{contractive}, which is not satisfied by most practical denoisers, especially denoisers modeled using deep CNNs. This led us to pose the following question: are PnP approaches with more general and expressive denoisers also convergent regularization methods? This question is perhaps more tractable if one can associate the PnP solution (after convergence) with the minimizer of an underlying variational objective. We, therefore, first consider gradient-step denoisers, for which it is possible to establish such a connection (see Theorem \ref{thm:pnp_admm_gsd}). Treating $\lambda$ in \eqref{eq:pnp_gs_hqs} as an explicit regularization parameter while using a fixed, pre-trained denoiser, one can interpret the converged PnP solution as a minimizer of $f+\lambda\,g_{\sigma}$, where $\lambda$ is varied depending on the noise level $\delta$ in the measurement data and $\sigma$ is kept fixed. The numerical results for image deblurring in Figure \ref{fig:GS_denoiser_deblurring_new} seem to indicate that gradient-step PnP is indeed a convergent regularization scheme, while the classical theory only guarantees stability akin to what is shown in \cite{ar_nips} subject to $g_{\sigma}$ being coercive and bounded below. In addition, the role of $\sigma$ as an implicit regularization parameter is not exploited, and it is kept unchanged regardless of the noise level in the measurement. This, in part, is due to the fact that the behavior of $g_{\sigma}$ w.r.t. $\sigma$ is non-trivial to characterize in a precise manner, leading to difficulties in tuning $\sigma$ based on $\delta$. In order to rigorously establish convergence, together with developing a principled approach to control the regularization strength arising from the denoiser, we consider PnP with linear denoisers in the next section.

\begin{figure}
	\centering
	\begin{subfigure}{0.80\textwidth}
		\includegraphics[width=\textwidth]{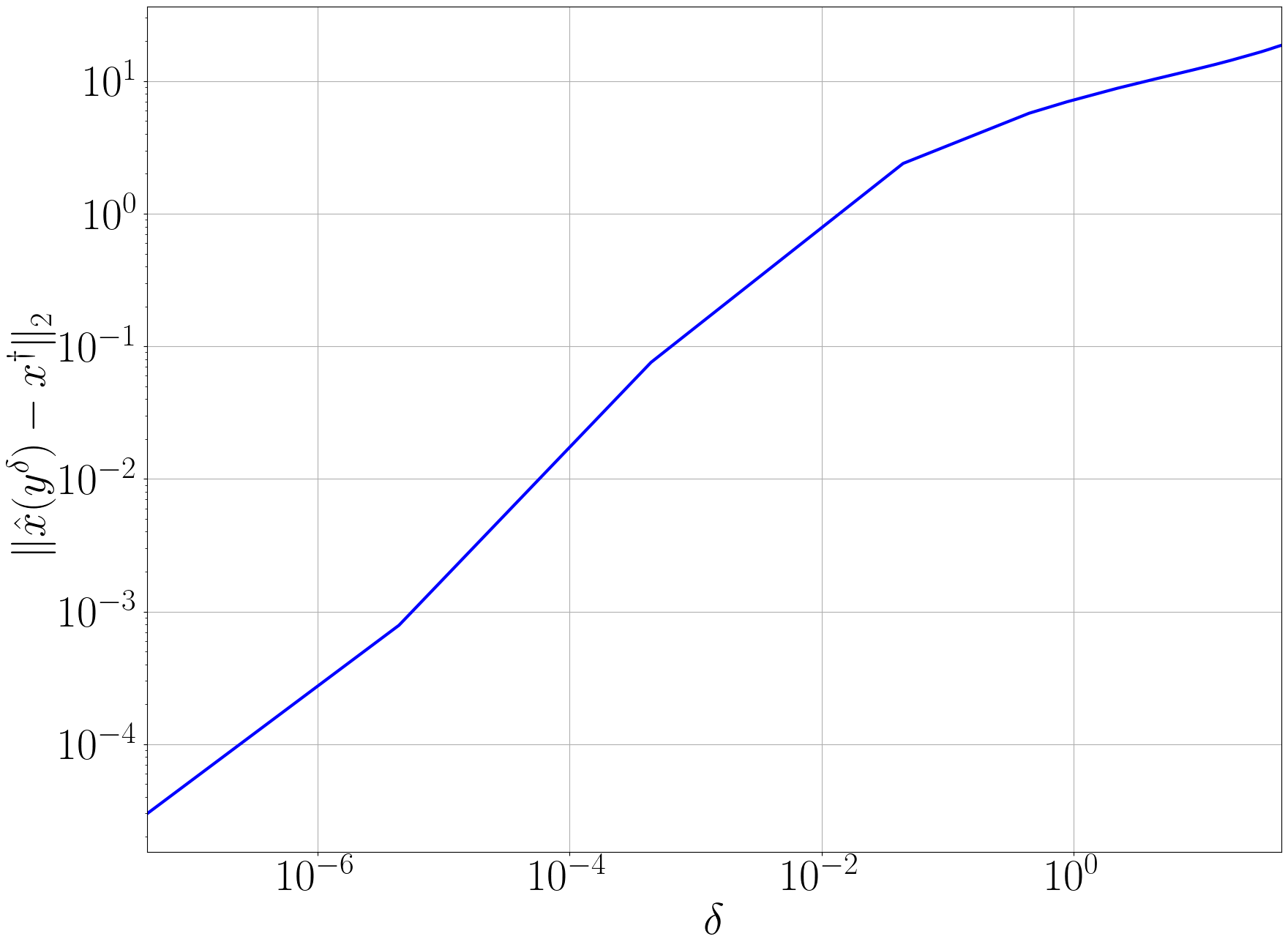}
		\caption{noise level vs. distance from the $g_{\sigma}$-minimizing solution $x^{\dagger}$.}
	\end{subfigure}
	\hfill
	\begin{subfigure}{0.30\textwidth}
		\includegraphics[width=\textwidth]{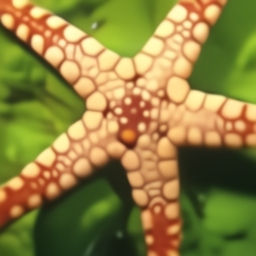}
		\caption{$\hat{x}\left(y^{\sigma_0}\right), \sigma_0=10\%$}
	\end{subfigure}
	\begin{subfigure}{0.30\textwidth}
		\includegraphics[width=\textwidth]{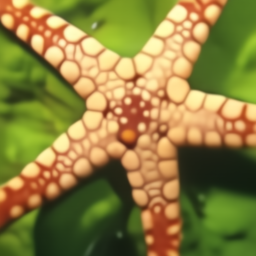}
		\caption{$\hat{x}\left(y^{\sigma_0}\right),\sigma_0=8\%$}
	\end{subfigure}
	\begin{subfigure}{0.30\textwidth}
		\includegraphics[width=\textwidth]{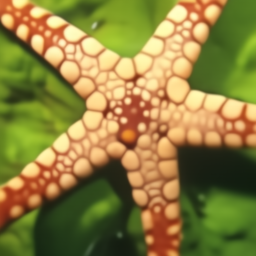}
		\caption{$\hat{x}\left(y^{\sigma_0}\right),\sigma_0=5\%$}
	\end{subfigure}
	\begin{subfigure}{0.30\textwidth}
		\includegraphics[width=\textwidth]{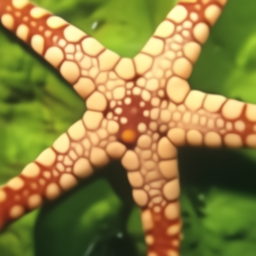}
		\caption{$\hat{x}\left(y^{\sigma_0}\right),\sigma_0=2\%$}
	\end{subfigure}
	\begin{subfigure}{0.30\textwidth}
		\includegraphics[width=\textwidth]{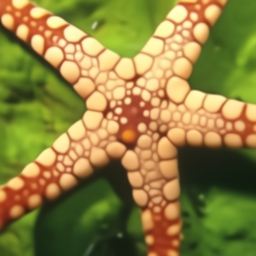}
		\caption{$\hat{x}\left(y^{\sigma_0}\right),\sigma_0=1\%$}
	\end{subfigure}
	\begin{subfigure}{0.30\textwidth}
		\includegraphics[width=\textwidth]{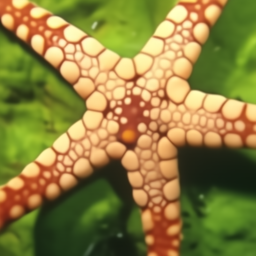}
		\caption{$\hat{x}\left(y^{\sigma_0}\right),\sigma_0=0.5\%$}
	\end{subfigure}
	\begin{subfigure}{0.30\textwidth}
		\includegraphics[width=\textwidth]{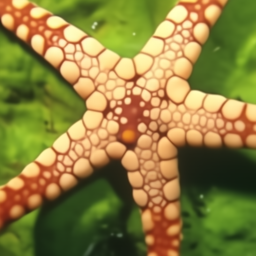}
		\caption{$\hat{x}\left(y^{\sigma_0}\right),\sigma_0=0.1\%$}
	\end{subfigure}
	\begin{subfigure}{0.30\textwidth}
		\includegraphics[width=\textwidth]{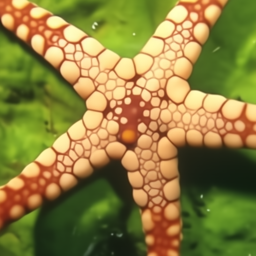}
		\caption{$x^{\dagger}, \sigma_0=0$}
	\end{subfigure}
	\begin{subfigure}{0.30\textwidth}
		\includegraphics[width=\textwidth]{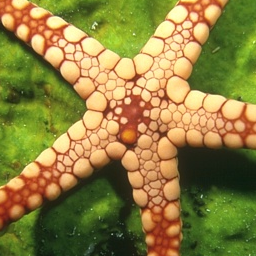}
		\caption{ground-truth}
	\end{subfigure}
																											
	\caption{\small{PnP gradient-step DRUNet denoiser as a convergent regularization method for image deblurring. The PnP scheme for reconstruction minimizes variational energy of the form $f+\lambda\,g_{\sigma}$, where $f$ is the fidelity and $g_{\sigma}$ is the regularizer induced by a pre-trained denoiser. The input blurry image is given by $y^{\sigma_0}=Ax+w$, where $A$ is a Gaussian blur kernel and $w$ is additive Gaussian noise with variance $\sigma_0^2$. The images from (b) to (i) are the deblurred images $\hat{x}\left(y^{\sigma_0}\right)$ corresponding to the noise level $\sigma_0$ (expressed as the \% of maximum pixel value 255.0 in the ground truth). The regularization parameter is selected as $\lambda=c\,\sigma_0+\epsilon$, where the constant $c=0.04$ and $\epsilon=10^{-4}$.}}
	\label{fig:GS_denoiser_deblurring_new}
\end{figure}



\section{Controlling the regularization strength in PnP}
\label{sec:controll_regPnP}
One fundamental question that arises when applying learned denoisers for solving inverse problems using PnP concerns itself with how to adjust the regularization strength that is applied. Indeed, learned denoisers are typically trained at a fixed noise level, whereas their practical application to inverse problems in a PnP framework and the theoretical notion of convergent regularization both require one to have certain control over the regularization strength.

An approach that has been shown to be beneficial in practice is the \textit{denoiser scaling} approach~\cite{xuBoostingPerformancePlugandPlay2020}: given a denoiser $D_\sigma$ (designed for denoising at a given noise level $\sigma$), we introduce an extra scaling parameter $\alpha > 0$, and define the scaled denoisers $\{D_{\sigma, \alpha}\}_{\alpha > 0}$ as 
\begin{equation}
	D_{\sigma, \alpha}(x) = \frac{1}{\alpha} D_\sigma (\alpha x).
	\label{eq:1hom_denoiser_scaling}
\end{equation}
This choice of scaling is motivated by the fact that if $J:X\to \Real\cup \{\infty\}$ is 1-homogeneous (i.e., $J(\tau\,u)=\tau J(u)$, for $\tau>0$) and its proximal operator is well-defined, we have
\begin{align*}
	\prox_{\tau J}(x) & = \argmin_y \frac{1}{2} \|x - y\|^2 + \tau J(y)                                       \\
	                  & = \argmin_y \frac{\tau^2}{2}\Big\|\frac{x}{\tau} - \frac{y}{\tau}\Big\|^2 + \tau J(y) \\
	                  & =\tau\argmin_u \frac{1}{2}\Big\|\frac{x}{\tau} - u\Big\|^2 + J(u)                     \\
	                  & = \tau \prox_J\Big(\frac{x}{\tau}\Big).                                               
\end{align*}
In other words, if $D_\sigma = \prox_J$, then $D_{\sigma, \alpha} = \prox_{J/\alpha}$. Let us note that the choice of this particular scaling, while natural (norms and seminorms are 1-homogeneous, for example), is somewhat arbitrary. Indeed, suppose that $J$ is instead $c$-homogeneous for some $c > 0$, i.e., $J(\delta\,u)=\delta^c J(u)$ for any $u$ and $\delta>0$. We have, with $\delta >0$ arbitrary,
\begin{align*}
	\prox_{\tau J}(x) & = \argmin_y \frac{1}{2}\|x - y\|^2 + \tau J(y)                                                         \\
	                  & =\argmin_y \frac{\delta^2}{2} \Big\|\frac{x}{\delta} - \frac{y}{\delta}\Big\|^2 + \tau J(y)            \\
	                  & =\delta \argmin_u \frac{1}{2} \Big\|\frac{x}{\delta} - u \Big\|^2 + \frac{\tau}{\delta^2} J (\delta u) \\
	                  & =\delta \argmin_u \frac{1}{2} \Big\|\frac{x}{\delta} - u\Big\|^2 + \frac{\tau}{\delta^{2-c}} J(u).     
\end{align*}
Choosing $\delta = \tau^{\frac{1}{2-c}}$, we find that
\begin{equation}
	\prox_{\tau J}(x) = \tau^{\frac{1}{2 - c}} \prox_J(\tau^{\frac{1}{c - 2}} x),
	\label{eq:denoiser_scaling_general}
\end{equation}
which agrees with the result for 1-homogeneous functionals and generalizes it, except for 2-homogeneous functionals where the above derivation does not work. In fact, this leads nicely into a setting where no form of denoiser scaling as in Equation~\eqref{eq:denoiser_scaling_general} can possibly be used to control the regularization strength to give a convergent regularization: for linear denoisers the multiplicative factor inside the denoiser can be pulled out and canceled against the factor outside of it.
\subsection{Controlling the regularization strength of a linear denoiser}
Let us consider the setting in which we have a linear denoiser $D_\sigma :X\to X$. If we are to interpret it as a proximal operator of some underlying functional, we must assume that it is a symmetric, positive semi-definite (p.s.d.) operator, and if we assume that the underlying functional is convex as well, then $D_\sigma$ must be non-expansive in addition. These properties are direct consequences of the characterization of proximal operators given in~\cite{moreauProximiteDualiteDans1965} and generalized (to potentially non-convex functionals) in~\cite{gribonvalCharacterizationProximityOperators2020}. Let us restrict to the case where $D_\sigma$ is non-expansive, bypassing the potential difficulties of non-convexity of the underlying variational problem. In fact, we will assume that $D_\sigma$ is contractive, i.e.\ $\|D_\sigma\|<1$, which as we will see later corresponds to assuming that the underlying regularization functional is coercive. Furthermore, we will assume that $D_\sigma$ is bounded from below, i.e. $\|D_\sigma(x)\| \geq c \|x\|$ for some $c >0$, so that $D_\sigma^{-1}$ exists and is a bounded operator. 
\begin{remark}
	In practice, the assumption of symmetry can be relaxed somewhat by taking a different perspective: in~\cite{gavaskarPlugandPlayRegularizationUsing2021} it is shown in finite dimensions that any denoiser which is similar to a symmetric p.s.d.\ matrix is admissible in PnP applications. Indeed, in this case we can find a modified inner product, with respect to which the denoiser is a proximal operator.
\end{remark}
Let us study the characterization of proximal operators in more detail for the linear denoiser $D_\sigma$. The goal is to understand the underlying functional $J:X\to \Real$ such that $D_\sigma = \prox_J$. Note first that it is immediate from the definition (Equation~\eqref{eq:prox_1}) that we can only hope to recover $J$ up to an additive constant. We have
\[D_\sigma = \prox_J = (\id + \partial J)^{-1} = (\partial [J + \|\cdot \|^2])^{-1},\]
with $\partial$ being the subdifferential. On the other hand, since $D_\sigma$ is linear, it is the gradient of the convex quadratic functional $\phi: X \to \Real$ given by $\phi(x) =\langle x, D_\sigma x\rangle / 2$, i.e.\ $D_\sigma = \nabla \phi$. Using a standard result from convex analysis, we know that $(\nabla \phi )^{-1} = \nabla \phi^*$, where $\phi^*$ is the convex conjugate of $\phi$, and 
\[\nabla \phi^* = \partial[J + \frac{1}{2} \|\cdot \|^2].\]
Hence, up to an irrelevant additive constant, we find that the underlying regularization functional $J$ corresponding to $D_\sigma$ is given by
\begin{equation}
	J(x) = \phi^*(x) - \frac{1}{2}\|x\|^2 = \frac{1}{2} \langle x, (D_\sigma^{-1} - \id) x\rangle.
	\label{eq:linear_prox_underlying}
\end{equation}
The most common way of controlling the regularization strength, when we have access to the underlying regularization functional $J$, is to simply scale it: introduce a parameter $\tau >0$ and consider $\prox_{\tau J}$. If we apply this to Equation~\eqref{eq:linear_prox_underlying}, we obtain
\[\tau J(x) =  \frac{1}{2} \langle x, ([\tau D_\sigma^{-1} - (\tau - 1) \id ] - \id)x\rangle,\]
which suggests, by following the above reasoning in reverse, that 
\begin{equation}
	\prox_{\tau J} = (\tau D_\sigma^{-1} - (\tau - 1)\id)^{-1} = h_\tau(D_\sigma).
	\label{eq:spectral_filtering_denoiser}
\end{equation}
Here $h_\tau : \Real \to \Real$, given by $h_\tau(\lambda) = \lambda/(\tau - \lambda(\tau -  1))$ is applied to $D_\sigma$ using the functional calculus. The takeaway message of the preceding derivation is that we can perform a \emph{spectral filtering} operation on the linear denoiser $D_\sigma$ to control its regularization strength. In fact, more general filter functions $h_\tau$ than the one seen here can be used, as we will see in what follows.
\begin{remark}
	It is worth contrasting the spectral filtering approach proposed here with well-established spectral filtering approaches to regularization of linear, ill-posed, inverse problems~\cite{engl1996regularization}: whereas the traditional approaches operate on the forward operator to enact a regularization effect, we operate on the denoiser (agnostic about the forward operator to which the denoiser will be applied) to control its regularization strength.
\end{remark}
To get a better understanding of what the spectral filtering operation does to a denoiser, consider Figure~\ref{fig:eigenvals_spectral_filtering}. This will help us get an idea of what we should ask of generalized filter functions, i.e.\ filter functions that do not just implement a scaling of the underlying regularization functional.
\begin{figure}[!htb]
	\centering
	\includegraphics[scale=1]{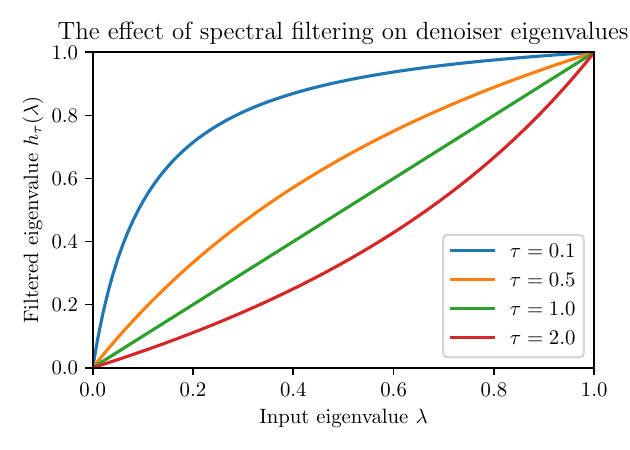}
	\caption{\small{The effect of filtering the denoiser as in~\eqref{eq:spectral_filtering_denoiser}. In accordance with intuition, the spectrum is flattened as $\tau \to 0$: as the regularization strength vanishes, the effect of the denoiser should vanish too.}}
	\label{fig:eigenvals_spectral_filtering}
\end{figure}

\subsection{Convergent regularization through generalized spectral filtering of linear denoisers}

In the previous section, we saw that there is a way in which we can spectrally filter a linear denoiser to effectively scale the underlying regularization functional. Now, we will generalize the conditions on the spectral filter and show that this spectral filtering of linear denoisers allows us to obtain a convergent regularization of linear, ill-posed, inverse problems.

We saw in Equation~\eqref{eq:linear_prox_underlying} that a linear denoiser is related to an underlying regularization functional $J$ as follows: we have $D_\sigma = \prox_J$, where
\[J(x) = \frac{1}{2} \langle x, (D_\sigma^{-1} - \id)x\rangle.\]
Furthermore, we saw the effect of scaling the regularization functional on the corresponding proximal operator. We can generalize this idea and look at 
\[J_\tau(x) := \frac{1}{2}\langle x, (h_\tau(D_\sigma)^{-1} - \id) x\rangle,\]
where $\{h_\tau:\mathbb R\to\mathbb R\}_{\tau \in (0,\infty)}$ is a family of spectral filters that we can apply to $D_\sigma$ using the continuous functional calculus. Let us now derive conditions on the spectral filters $h_\tau$ such that this gives a convergent regularization. For one, since we are assuming that $D_\sigma$ is bounded from below and contractive, we have that $\spec(D_\sigma) \subset (0, 1)$, and the same considerations that led to these assumptions then lead to us asking that $h_\tau(\spec(D_\sigma)) \subset (0, 1)$ for each $\tau > 0$. 

Since we would like to think of $J_\tau$ as somewhat similar to $\tau J$, we ask the question whether the limit
\[J^*(x) := \lim_{\tau \to 0} \frac{1}{\tau}J_\tau(x) = \lim_{\tau \to 0}\frac{1}{2\tau} \langle x, h_\tau(D_\sigma)^{-1} x\rangle - \frac{1}{2\tau} \| x\|^2\]
exists and is sufficiently well-behaved. Indeed, if this limit is well-defined, a natural result to aim for would be that we have convergence to a $J^*$-minimizing least-squares solution to the inverse problem with appropriate choices of $\tau \to 0$ as $\delta\to 0$. 

\begin{remark}
	In Theorem~\ref{theorem:spectral_filtering_linear}, as above, we will assume that the denoiser is contractive, which by~\eqref{eq:linear_prox_underlying} implies that the corresponding regularization functional is coercive. This may be relaxed, by requiring that the kernel of the forward operator is compatible with the denoiser in the sense that the objective function in~\eqref{eq:linear_denoiser_variational_problem} is coercive.
\end{remark}
\begin{theorem}
	\label{theorem:spectral_filtering_linear}
	Suppose that $D_\sigma:X\to X$ is a bounded, linear, self-adjoint operator, which is interpreted as a denoiser. Furthermore, assume that $D_\sigma$ is positive definite, bounded from below, and contractive (so that $\spec(D_\sigma)\subset (0, 1)$). Suppose in addition that we have a bounded, linear forward operator $A: X\to Y$ (assuming w.l.o.g.\ that $\|A\| =1$), and that $\{h_\tau: \mathbb R \to\mathbb R\}_{\tau \in (0, \infty)}$ is a collection of continuous scalar functions satisfying
	\begin{enumerate}[label=\textbf{A.\arabic*}]
		\item \label{assumption:spectrum}\[h_\tau(\spec(D_\sigma)) \subset (0, 1)\,\quad\text{for any } \tau > 0,\]
		\item \label{assumption:convergence}\[r_\tau(\lambda):= \frac{1-h_\tau(\lambda)}{\tau h_\tau(\lambda)} \quad\text{converges uniformly for $\lambda \in \spec(D_\sigma)$ as }\tau \to 0,\]
		      with limit $r^*$ and rate $\|r_\tau - r^*\|_{L^\infty(\spec(D_\sigma))} = o(\tau)$,
		\item 	\label{assumption:coercivity}\[\underline c:=\inf_{\tau > 0, \lambda \in \spec(D_\sigma)} r_\tau(\lambda)> 0, \quad \overline c:=\sup_{\tau > 0, \lambda\in\spec(D_\sigma)} r_\tau(\lambda) < \infty.\]
	\end{enumerate}
	In this setting, let us define (using the continuous functional calculus to apply scalar functions to $D_\sigma$)
	\[J_\tau(x) := \frac{1}{2} \langle x, (h_\tau(D_\sigma)^{-1} -\id) x\rangle=\frac{\tau}{2} \langle x, r_\tau(D_\sigma) x\rangle.\]
	We can compute the solution to the variational problem
	\begin{equation}\hat x = \argmin_{x\in X} \frac{1}{2} \| A x - y\|^2 + J_\tau(x)
		\label{eq:linear_denoiser_variational_problem}
	\end{equation}
	using PnP-FBS:
	\begin{equation}\hat x = \lim_{k\to\infty} x_k, \qquad \text{where}\qquad x_{k+1} = h_\tau(D_\sigma)(x_k - A^*(Ax_k - y)).
		\label{eq:linear_denoiser_pnp_fbs}
	\end{equation}
	By~\ref{assumption:convergence} we can define $J^*(x) = \lim_{\tau \to 0}J_\tau(x)/\tau$. Now, we obtain a convergent regularization when the regularization parameter $\tau^\delta$ is chosen appropriately: suppose that $\tau^\delta\sim\delta$. Assume that we have an underlying image $x^*\in X$, clean measurements $y=Ax^*$, $\{y^\delta\}_{\delta > 0}$ is a sequence in $Y$ satisfying $\|y^\delta - y\|\leq \delta$, and
	\[\hat x(y^\delta,\tau^\delta) = \argmin_{x\in X} \frac{1}{2} \| A x - y^\delta \|^2 + J_{\tau^\delta} (x).\]
	Then $\hat x(y^\delta, \tau^\delta) \to x^\dagger$, where 
	\[x^\dagger = \argmin_{x \in X\,\textrm{s.t.}\,A^*(Ax - y)=0} J^*(x)\]
	is the $J^*$-minimizing least squares solution to the inverse problem $Ax = y$.				
\end{theorem}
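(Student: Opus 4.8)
The plan is to first reduce both the variational problem and the PnP-FBS iteration to explicit linear algebra, and only then treat the vanishing-noise limit as a perturbed generalized Tikhonov regularization. Since $J_\tau$ is a convex quadratic with gradient $\nabla J_\tau(x) = (h_\tau(D_\sigma)^{-1} - \id)x = \tau\, r_\tau(D_\sigma)x$, the first-order optimality condition for \eqref{eq:linear_denoiser_variational_problem} reads $(A^*A + \tau\, r_\tau(D_\sigma))\hat x = A^* y^\delta$. Assumption \ref{assumption:coercivity} gives $r_\tau(D_\sigma) \ge \underline c\,\id$ through the functional calculus, so $A^*A + \tau\, r_\tau(D_\sigma) \ge \tau\underline c\,\id$ is boundedly invertible; hence $\hat x(y^\delta,\tau) = (A^*A + \tau\, r_\tau(D_\sigma))^{-1}A^* y^\delta$ exists and is unique. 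For the iteration \eqref{eq:linear_denoiser_pnp_fbs} I would write it as the affine map $x_{k+1} = M x_k + b$ with $M = h_\tau(D_\sigma)(\id - A^*A)$ and $b = h_\tau(D_\sigma)A^* y$. Because $0\le A^*A\le\|A\|^2\id = \id$ we have $\|\id - A^*A\|\le 1$, while \ref{assumption:spectrum} together with compactness of $\spec(D_\sigma)$ gives $\|h_\tau(D_\sigma)\| = \sup h_\tau(\spec(D_\sigma)) < 1$; thus $\|M\| < 1$ and Banach's fixed-point theorem yields convergence to the unique fixed point. Multiplying the fixed-point equation by the (bounded) inverse $h_\tau(D_\sigma)^{-1}$ recovers exactly the optimality condition above, identifying the limit with $\hat x(y^\delta,\tau)$.

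For the convergence as $\delta\to 0$ the key device is to separate the $\tau$-dependence of the penalty operator from the regularization itself. I would introduce the fixed-penalty reconstruction $\bar x(y^\delta,\tau) := (A^*A + \tau\, r^*(D_\sigma))^{-1}A^* y^\delta$, where $r^*(D_\sigma) = \lim_{\tau\to 0} r_\tau(D_\sigma)$ exists in operator norm by \ref{assumption:convergence} and satisfies $\underline c\,\id \le r^*(D_\sigma)\le\overline c\,\id$ by \ref{assumption:coercivity}. Since $r^*(D_\sigma)$ is self-adjoint and boundedly invertible, it induces an equivalent inner product $\langle u,v\rangle_* := \langle u, r^*(D_\sigma)v\rangle$, with respect to which $J^*(x) = \tfrac12\langle x, r^*(D_\sigma)x\rangle = \tfrac12\|x\|_*^2$, and a short computation shows that $\bar x(y^\delta,\tau)$ is precisely the standard Tikhonov-regularized solution of $A:(X,\langle\cdot,\cdot\rangle_*)\to Y$ with parameter $\tau$. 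The classical convergence theory for Tikhonov regularization in Hilbert spaces \cite{engl1996regularization} then applies verbatim: since $\tau^\delta\sim\delta$ forces $\delta^2/\tau^\delta\to 0$ (equivalently $\delta/\sqrt{\tau^\delta}\to 0$) while $\tau^\delta\to 0$, we get $\bar x(y^\delta,\tau^\delta) \to x^\dagger$, the minimum-$\|\cdot\|_*$-norm least-squares solution; and because the normal equation $r^*(D_\sigma)^{-1}A^*(Ax - y)=0$ is equivalent to $A^*(Ax-y)=0$, this minimum-norm solution is exactly $\argmin_{x:\,A^*(Ax-y)=0}J^*(x)$.

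It then remains to control the discrepancy between the true reconstruction $\hat x$ (built from $r_{\tau^\delta}(D_\sigma)$) and $\bar x$ (built from $r^*(D_\sigma)$). Using the resolvent identity $B^{-1}-C^{-1}=B^{-1}(C-B)C^{-1}$ with $B = A^*A + \tau\, r_\tau(D_\sigma)$ and $C = A^*A + \tau\, r^*(D_\sigma)$, I obtain $\hat x - \bar x = -\tau(A^*A + \tau\, r_\tau(D_\sigma))^{-1}(r_\tau(D_\sigma)-r^*(D_\sigma))\,\bar x$. Bounding $\|(A^*A + \tau\, r_\tau(D_\sigma))^{-1}\|\le (\tau\underline c)^{-1}$ and invoking the uniform convergence $\|r_\tau(D_\sigma)-r^*(D_\sigma)\| = \|r_\tau - r^*\|_{L^\infty(\spec(D_\sigma))}\to 0$ from \ref{assumption:convergence} (the $o(\tau)$ rate even yields an explicit decay), together with the boundedness of $\{\bar x\}$ from the previous step, gives $\|\hat x(y^\delta,\tau^\delta)-\bar x(y^\delta,\tau^\delta)\|\le \underline c^{-1}\|r_{\tau^\delta}-r^*\|_{L^\infty}\,\|\bar x\|\to 0$. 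Combining this with $\bar x(y^\delta,\tau^\delta)\to x^\dagger$ through the triangle inequality proves $\hat x(y^\delta,\tau^\delta)\to x^\dagger$. The main obstacle throughout is that $A^*A$ and the penalty operator $r_\tau(D_\sigma)$ need not commute, so no simultaneous diagonalization is available and a direct spectral-filter computation of the error is impossible; the two devices above — passing to the weighted inner product $\langle\cdot,\cdot\rangle_*$ so that the fixed-penalty problem becomes genuinely standard Tikhonov, and the resolvent-identity estimate that is made to vanish precisely by the uniform limit in \ref{assumption:convergence} and the uniform coercivity in \ref{assumption:coercivity} — are what circumvent it.
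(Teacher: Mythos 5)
Your proof is correct and reaches the result by the same overall decomposition as the paper---compare $\hat x(y^\delta,\tau^\delta)$ with the fixed-penalty reconstruction $\bar x(y^\delta,\tau^\delta)=(A^*A+\tau^\delta r^*(D_\sigma))^{-1}A^*y^\delta$ (the paper's $\hat x_{\textrm{static}}$) and conclude via the triangle inequality---but both halves are executed with different tools, and one of them is genuinely sharper than the paper's. For the static half, the paper invokes the general variational-regularization result of Scherzer et al.\ (Proposition 3.32) applied to $\tau J^*$, whereas you reduce to classical linear Tikhonov theory in the equivalent inner product $\langle u,v\rangle_* = \langle u, r^*(D_\sigma)v\rangle$; this is a more elementary, self-contained route, legitimate because $\underline c\,\id\le r^*(D_\sigma)\le\overline c\,\id$, and your observation that $r^*(D_\sigma)^{-1}A^*(Ax-y)=0$ is equivalent to $A^*(Ax-y)=0$ correctly identifies the limit as the $J^*$-minimizing least-squares solution. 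The more substantive divergence is the perturbation half: the paper expands $M_\tau^{-1}-M_{\tau,\textrm{static}}^{-1}$ in a Neumann series and bounds it in \emph{operator norm}, obtaining an estimate of order $\tau^{-1}\|r_\tau-r^*\|_{L^\infty(\spec(D_\sigma))}/\underline c^2$, which is then multiplied by the bounded-but-not-shrinking vector $A^*y^\delta$---so the $o(\tau)$ rate in \textbf{A.2} is genuinely consumed there. Your single application of the resolvent identity to the data vector gives instead $\hat x-\bar x=-\tau B^{-1}(r_\tau(D_\sigma)-r^*(D_\sigma))\bar x$ with $\|B^{-1}\|\le(\tau\underline c)^{-1}$, and since $\|\bar x(y^\delta,\tau^\delta)\|$ stays bounded (it converges to $x^\dagger$ by the static step, so no circularity), the factors of $\tau$ cancel and you only need $\|r_\tau-r^*\|_{L^\infty(\spec(D_\sigma))}\to 0$. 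In other words, your argument establishes the theorem under a strictly weaker form of \textbf{A.2} (uniform convergence with no rate); the $o(\tau)$ hypothesis is needed only on the paper's operator-norm route, or if one wants quantitative error estimates, which your bound $\|\hat x-\bar x\|\le\underline c^{-1}\|r_\tau-r^*\|_{L^\infty(\spec(D_\sigma))}\|\bar x\|$ then delivers for free.
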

																																																																																																																																																																					
\begin{proof} 
	First note that under the assumptions of the theorem, PnP-FBS as described in~\eqref{eq:linear_denoiser_pnp_fbs} is a contractive fixed-point iteration (so that it has a unique fixed point to which it converges) for any $\tau$ and $y$, with fixed points satisfying the optimality condition of the variational problem in~\eqref{eq:linear_denoiser_variational_problem}.
																																																																																																																																																						
	By~\ref{assumption:coercivity}, we can define $\underline J(x) = \inf_\tau J_\tau(x) / \tau $ and $\overline J(x) = \sup_\tau J_\tau(x) / \tau$, so that
	\begin{equation}
		\frac{\underline c}{2} \|x\|^2 \leq \underline J(x) \leq \frac{J_\tau(x)}{\tau} \leq \overline J(x) \leq  \frac{\overline c}{2} \|x\|^2.
		\label{ineq:coercivity}
	\end{equation}
	Taking limits, this also gives us that
	\[\frac{\underline c}{2} \|x\|^2 \leq J^*(x) \leq \frac{\overline c}{2} \|x\|^2.\]
	The above bounds tell us that the $J^*$-minimizing least squares solution to the inverse problem is unique, since it is defined by the minimization of a strongly convex functional on a closed linear subspace of $X$.
																																														
	We have clean measurements $y = Ax^*$, a set of $y^\delta$ such that $\|y - y^\delta \|\leq \delta$ and a parameter choice rule $\delta\mapsto \tau^\delta$ satisfying $\tau^\delta \sim \delta$ as $\tau \to 0$.  We are considering the corresponding set of reconstructions 
	\[\hat x(y^\delta, \tau^\delta) = \argmin_{x\in X} \frac{1}{2} \|A x - y^\delta \|^2 + J_{\tau^\delta}(x).\]
	By the remarks above, we can compute these reconstructions using~\eqref{eq:linear_denoiser_pnp_fbs}. 																																					
	For the sake of the proof, let us also define the variational reconstruction operators with a static regularization functional $J^*$, as follows
	\begin{equation}\hat x_\textrm{static}(y, \tau):= \argmin_{x\in X} \frac{1}{2} \|A x - y \|^2 + \tau J^*(x).
		\label{eq:static_regularization_variational_problem}
	\end{equation}
	This static regularization approach, with the parameter choice that we are using, is a convergent regularization by the existing theory (this is guaranteed, for example, by the general result in~\cite[Proposition 3.32]{scherzer2009variational}): $\hat x_\textrm{static}(y^\delta, \tau^\delta) \to x^\dagger$ with $x^\dagger$ the $J^*$-minimizing least squares solution to the inverse problem. Furthermore, the triangle inequality gives us that
	\begin{equation}\|\hat x(y^\delta, \tau^\delta) - x^\dagger \| \leq \|\hat x(y^\delta, \tau^\delta) - \hat x_\textrm{static}(y^\delta, \tau^\delta)\| + \|x_\textrm{static}(y^\delta, \tau^\delta) - x^\dagger\|,
		\label{ineq:convergence_triangle_inequality}
	\end{equation}
	so it suffices to show that 
	\[\|\hat x(y^\delta, \tau^\delta) - \hat x_\textrm{static}(y^\delta, \tau^\delta)\|\to 0\quad\text{as }\delta \to 0.\]
	We can write 
	\[\hat x(y, \tau) = [A^*A + \tau r_\tau(D_\sigma)]^{-1} A^* y\]
	and
	\[\hat x_\textrm{static}(y, \tau) = [A^*A + \tau r^*(D_\sigma)]^{-1} A^* y,\]
	so we just need to show that $\|M_\tau^{-1} - M_{\tau,\textrm{static}}^{-1}\|\to 0 $ as $\tau \to 0$ (since $\tau^\delta \sim \delta$), where $M_\tau = A^*A + \tau r_\tau(D)$ and $M_{\tau, \textrm{static}} = A^*A + \tau r^*(D)$. We have
	\begin{align}
		M_\tau^{-1} - M_{\tau, \textrm{static}}^{-1} & = [M_{\tau, \textrm{static}} + \tau (r_\tau(D_\sigma) - r^*(D_\sigma))]^{-1} - M_{\tau, \textrm{static}} ^{-1}                \notag                                         \\
		                                             & = \Big[[\id + \tau M_{\tau, \textrm{static}}^{-1} (r_\tau (D_\sigma) - r^*(D_\sigma))]^{-1} -\id\Big] M_{\tau, \textrm{static}}^{-1}. \label{eq:difference_filtering_static} 
	\end{align}
	We will expand the inner matrix inversion using a Neumann series. Note first (by~\ref{assumption:coercivity}) that $M_{\tau, \textrm{static}}$ is bounded from below: $\|M_\tau x\|\geq \underline c \tau \|x\|$. As a result, $\|M_{\tau,\textrm{static}}^{-1}\| \leq 1/(\underline c \tau)$ and we can estimate 
	\begin{equation}\|\tau M_{\tau, \textrm{static}}^{-1} (r_\tau (D_\sigma) - r^*(D_\sigma))\| \leq \tau \frac{1}{\underline c \tau} \|r_\tau(D_\sigma) - r^*(D_\sigma)\| = \frac{\|r_\tau - r^*\|_{L^\infty(\spec(D_\sigma))}}{\underline c}.
		\label{ineq:neumann_bound}
	\end{equation}
	Since \ref{assumption:convergence} tells us that $\|r_\tau - r^*\|_{L^\infty(\spec(D_\sigma))} \to 0$ as $\tau \to 0$, this must be smaller than $1$ for sufficiently small $\tau$, which is a sufficient condition for absolute convergence of the Neumann series. Using this and~\eqref{eq:difference_filtering_static}, we see that
	\begin{align*}
		M_\tau^{-1} - M_{\tau, \textrm{static}}^{-1} & = \Big[\sum\limits_{n=0}^\infty [-\tau M_{\tau,\textrm{static}}^{-1} (r_\tau(D_\sigma) - r^*(D_\sigma)]^n - \id \Big] M_{\tau, \textrm{static}}^{-1}      \\
		                                             & =  \Big[\sum\limits_{n=1}^\infty [-\tau M_{\tau, \textrm{static}}(r_\tau(D_\sigma)-r^*(D_\sigma))]^n\Big]M_{\tau, \textrm{static}} ^{-1}                . 
	\end{align*}
	Finally, we can simply estimate its norm from this as follows, using~\eqref{ineq:neumann_bound} and the fact $\|M_{\tau,\textrm{static}}^{-1} \|\leq 1 / (\underline c \tau)$:
	\begin{align*}
		\|M_\tau^{-1} - M_{\tau, \textrm{static}}^{-1}\| & \leq \sum\limits_{n=1}^\infty \Big(\frac{\|r_\tau - r^*\|_{L^\infty(\spec(D_\sigma))}}{\underline c}\Big)^n \frac{1}{\underline c \tau}                                      \\
		                                                 & =\frac{\|r_\tau - r^*\|_{L^\infty(\spec(D_\sigma))}}{\underline c}\frac{1}{1 - \frac{\|r_\tau - r^*\|_{L^\infty(\spec(D_\sigma))}}{\underline c}} \frac{1}{\underline c\tau} \\
		                                                 & =\frac{1}{\tau}\frac{\|r_\tau - r^*\|_{L^\infty(\spec(D_\sigma))}}{\underline c^2 - \underline c\|r_\tau - r^*\|_{L^\infty(\spec(D_\sigma))}} .                              
	\end{align*}
	Since we have assumed that $\|r_\tau - r^* \|_{L^\infty(\spec(D_\sigma))} = o(\tau)$ as $\tau \to 0$, we find by the above reasoning that $\|\hat x(y^\delta, \tau^\delta) - \hat x_\textrm{static}(y^\delta, \tau^\delta)\| \to 0$. Recalling the inequality in \eqref{ineq:convergence_triangle_inequality} lets us conclude that the spectral filtering approach is a convergent regularization.
\end{proof}
																																																																																																																																																																					
\begin{example}
	Consider the case previously considered in~\eqref{eq:linear_prox_underlying} and~\eqref{eq:spectral_filtering_denoiser}, corresponding to $h_\tau(\lambda) =\lambda / (\tau (1 - \lambda) + \lambda)$. We have 
	\[r_\tau(\lambda) = \frac{1-h_\tau(\lambda)}{\tau h_\tau(\lambda)} = \frac{1 - \lambda}{\lambda}.\]
	In particular, the assumptions \ref{assumption:coercivity}, \ref{assumption:convergence} and \ref{assumption:spectrum} are trivially satisfied: we have $h_\tau(\lambda) < \lambda$ for $\lambda > 0$, $r^* = r_\tau$ for all $\tau > 0$ and 
	\[	\inf_{\tau > 0, \lambda\in \spec(D_\sigma)} r_\tau(\lambda)= \frac{1 - \lambda_{\max}(D_\sigma)}{\lambda_{\max}(D_\sigma)} > 0\]
	and
	\[
		\quad \sup_{\tau > 0, \lambda\in \spec(D_\sigma)} r_\tau(\lambda)= \frac{1 - \lambda_{\min}(D_\sigma)}{\lambda_{\min}(D_\sigma)} < \infty.\]
		This should come as no surprise, since by the previous discussion, this choice of spectral filtering simply corresponds to the static regularization approach used in the proof of Theorem~\ref{theorem:spectral_filtering_linear}, for which classical theory establishes its convergence properties.
		\end{example}
																																																						
		\subsection{Experiments}
																																																																																																																																																																																											
		In this section, we will demonstrate the use of the spectral filtering approach to control the regularization strength of a learned denoiser, when applied to an inverse problem using PnP-FBS, showing that it in fact gives rise to a practically convergent regularization method. Since the spectral filtering approach was developed for linear denoisers, we first need to decide on a reasonable design for a linear learnable denoiser.
																																																																																																																																																																																											
		In this work, we will modify the U-net architecture~\cite{ronnebergerunet2015}, which continues to be used with great success in image-to-image tasks, and combines a downscaling and upscaling path (as in an autoencoder) with skip connections that connect the corresponding scales before and after the bottleneck. The key insight for us is that the U-net architecture is symmetric, in the following sense: if the downscaling and upscaling operations are linear and each other's transposes, and the activation functions and biases are omitted, the U-net is linear and its transpose is a U-net of the same shape (which can be thought of as running the original U-net in reverse). In particular, it is straightforward to see that we can obtain a symmetric linear U-net in this way by tying weights between the downscaling and upscaling paths. Alternatively, and perhaps more simply, we can take the average of a linear U-net and its transpose to get a symmetric linear denoiser. This is the approach that we will take in the experiments considered in this section, since we can leverage the power of JAX~\cite{jax2018github} to do so: given a linear U-net, we can efficiently compute its vector-Jacobian products to get its transpose. Figure~\ref{fig:compare_linear_nonlinear_unet} shows a comparison of the denoising performance (in the same setting as the one we will consider for the application to inverse problems below) of such a linear U-net with a comparable non-linear U-net. By this, we mean that the networks have the same sizes and the same number of trainable parameters. While the non-linear U-net allows for better reconstructions, most notably in terms of sharpness, both denoisers remove a significant part of the noise in the noisy images. In what follows, we will use the linear U-net $D_{\sigma, \textrm{l}}$ and simply call it $D_\sigma$.
																																																														
		\begin{figure}[!htb]
			\centering
			\includegraphics*[scale=1]{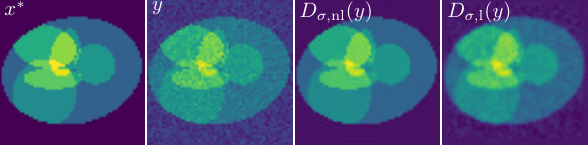}
			\caption{\small{Comparing the denoising performance of a non-linear U-net ($D_{\sigma,\textrm{nl}}$) with a linear, symmetric U-net ($D_{\sigma,\textrm{l}}$) based on the same architecture. Here $x^*$ is a ground truth image and $y$ is the same image, corrupted by Gaussian noise. These images are generated in the same way as the training data was generated. In contrast to $D_{\sigma,\textrm{nl}}$, $D_{\sigma, \textrm{l}}$ struggles to reconstruct sharp edges as it does not contain any non-linearity.  On the other hand, both denoisers significantly improve the signal-to-noise ratio: $y$ has a PSNR of 24.3 dB, $D_{\sigma, \textrm{nl}}(y)$ has a PSNR of 34.0 dB and $D_{\sigma, \textrm{l}}(y)$ has a PSNR of 27.8 dB.}}
			\label{fig:compare_linear_nonlinear_unet}								
		\end{figure}
																																																																
		The experiment that we will consider is concerned with the inverse problem of image reconstruction in computed tomography (CT). We will consider images of size $64\times 64$, consisting of randomly generated ellipse phantoms as in Figure~\ref{fig:spectral_filtering_images}, and simulate CT measurements (sinograms) using the ASTRA toolbox~\cite{van_aarle_astra_2015, aarle_fast_2016} with a parallel beam geometry with 150 equispaced views. A linear U-net is trained as a denoiser on ellipse phantoms corrupted with Gaussian white noise, after which we apply the denoiser in a PnP-FBS manner: denoting the forward operator, which maps images $u$ to (clean) sinograms $y$ by $A$, the noisy measurements $y^\delta$ and the trained denoiser by $D_\sigma$, we iterate
		\[x_{k + 1} = h_{\tau}(D_\sigma)( x_k - \eta A^*(Ax_k - y^\delta)), \qquad \hat x(y^\delta, \tau) = \lim_{k\to\infty} x_k\]
		where $\eta$ is a step size, satisfying $\eta \leq 2/\|A\|^2$, so that the limit is well-defined. Here, we simply use the spectral filters $h_\tau$ corresponding to scaling the underlying regularization functional as seen in Equation~\eqref{eq:spectral_filtering_denoiser}.
																																																																								
		We will consider a sequence of noisy measurements $\{y^\delta\}_{\delta > 0}$ such that $\|y^\delta - y\|\leq \delta$ and a corresponding step size $\tau^\delta \propto \delta$, satisfying the conditions of Theorem~\ref{theorem:spectral_filtering_linear}. In Figure~\ref{fig:spectral_filtering_convergence} and Figure~\ref{fig:spectral_filtering_images} we show that the spectral filtering approach indeed leads to a practically convergent regularization, as predicted by Theorem~\ref{theorem:spectral_filtering_linear}.

		\begin{figure}[!htb]
			\centering
			\includegraphics[scale=1]{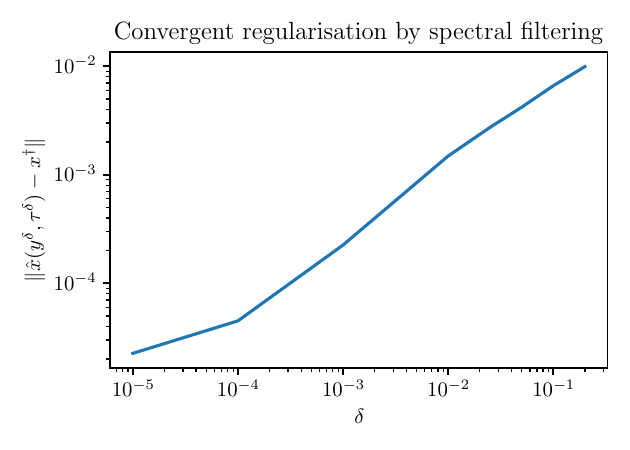}
			\caption{\small{Applying the spectral filtering approach, we observe convergent regularization in practice. Here $x^\dagger$ is the $J^*$-minimizing solution to the noiseless least-squares problem as in Theorem~\ref{theorem:spectral_filtering_linear}.}}
			\label{fig:spectral_filtering_convergence}
		\end{figure}

		\begin{figure}[!htb]
			\centering
			\includegraphics[scale=1]{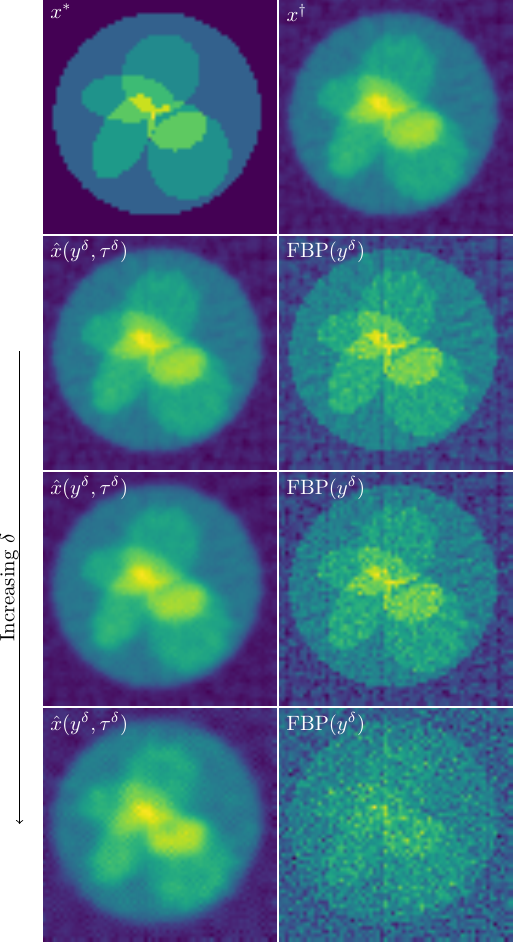}
			\caption{\small{Applying the spectral filtering approach, we observe convergent regularization in practice. We show a selection of snapshots corresponding to the plot in Figure~\ref{fig:spectral_filtering_convergence}. Note that $x^*$ (the underlying ground truth) is distinct from $x^\dagger$ since the forward operator has a non-trivial kernel.}}
			\label{fig:spectral_filtering_images}
		\end{figure}

\section{Conclusions}
The question if a reconstruction algorithm provides a convergent regularization has been long studied in inverse problems, as it provides more than just the knowledge that a solution can be computed at a certain noise level. It tells us that stable solutions exist for all noise realizations and even more importantly that in the limit case, when noise vanishes, we obtain a solution of the underlying operator equation. In other words, we can guarantee mathematically that obtained solutions are indeed solutions to the inverse problem. 

This is in contrast to some novel data-driven approaches where we may only guarantee that obtained solutions are minimizers of the empirical loss, given suitable training data. Consequently, the concept of convergent data-driven reconstructions has gained considerable interest very recently, see for instance~\cite{mukherjee2023learned}. Here, PnP approaches take a special role due to their straightforward connection to convex optimization~\cite{kamilov2023plug} and the possibility to incorporate learned denoisers given by non-linear neural networks. But despite considerable advances in establishing convergence notions, i.e., fixed-point and objective convergence, the question of convergent regularization is still open for general non-linear denoisers. 

In this work, we presented a step forward for learned linear denoisers using the novel concept of spectral filtering of the denoiser. The presented approach allows to establish a provably convergent regularization in the PnP framework. Additionally, this convergence is demonstrated numerically on the inverse problem of CT image reconstruction. As established in Theorem~\ref{theorem:spectral_filtering_linear}, there is some freedom in the choice of filters to apply to the denoiser. In future work, this choice could be studied in more detail. In this direction, it is of particular interest to choose spectral filters that are not too computationally costly to evaluate but still give a way to tune the regularization strength of the denoiser. Indeed, in the present implementation of the method, after training, the denoiser is instantiated as a matrix, the eigen-decomposition of which is computed to apply the spectral filtering. By considering spectral filters given by polynomials, for example, we would circumvent the need to instantiate the denoiser as a matrix and compute a full eigen-decomposition. Besides this, it would be of great interest to study whether there is any reasonable generalization of the denoiser filtering approach to the setting in which the denoiser is non-linear.

In fact, we have observed similar convergence behavior numerically even when using a non-linear denoiser in the PnP gradient-step framework (see Figure~\ref{fig:GS_denoiser_deblurring_new}), suggesting a promising direction for proving that PnP with realistic assumptions on the denoiser can give rise to convergent regularization. The gradient-step framework is, however, just one way of controlling the regularization strength of the learned denoiser. In particular, it relies on flipping the usual splitting of the variational objective and, as a result, requires repeated evaluation of the proximal operator of the data term. This may be very computationally costly if the forward operator is expensive to evaluate. As a result, it is still of great interest to study other ways of controlling the regularization strength of a realistic learned denoiser in PnP that will result in provably convergent regularization.

\section*{Acknowledgments}
CBS acknowledges support from the Philip Leverhulme Prize, the Royal Society Wolfson Fellowship, the EPSRC advanced career fellowship EP/V029428/1, EPSRC grants EP/S026045/1 and EP/T003553/1, EP/N014588/1, EP/T017961/1, the Wellcome Innovator Awards 215733/Z/19/Z and 221633/Z/20/Z, the European Union Horizon 2020 research and innovation programme under the Marie Sk\l odowska-Curie grant agreement No.\ 777826 NoMADS, the Cantab Capital Institute for the Mathematics of Information and the Alan Turing Institute. FS acknowledges support from the EPSRC advanced career fellowship EP/V029428/1. AH acknowledges support from Academy of Finland's Centre of Excellence of Inverse Modelling and Imaging proj. 353093 and the Academy Research Fellow proj. 338408.

\bibliographystyle{spmpsci}      

\bibliography{ref}



\end{document}